\tikzstyle{block}=[draw opacity=0.7,line width=1.4cm]
\newtheorem{thm}{Theorem}[section]
\newtheorem{lem}[thm]{Lemma}
\newtheorem{cor}[thm]{Corollary}
\newtheorem*{conjecture*}{Conjecture}
\newtheorem*{thm*}{Theorem}
\newtheorem{question}[thm]{Question}
\theoremstyle{definition}
\newtheorem{remark}[thm]{Remark}
\newtheorem{example}[thm]{Example}
\newcommand{\OO}{\mathcal{O}}    
\newcommand{\FF}{\mathbb{F}}      
\newcommand{\ZZ}{\mathbb{Z}}     
\newcommand{\PP}{\mathbb{P}}      
\newcommand{\QQ}{\mathbb{Q}}      
\newcommand{\an}[1]{\operatorname{an}}  
\newcommand{\ve}{\varepsilon}
\newcommand{\p}{\mathfrak{p}}
\DeclareMathOperator{\Per}{Per}
\DeclareMathOperator{\lcm}{lcm}
\title{Density of Periodic Points\\ for Latt\`es maps over Finite Fields}
\author{Zo\"e Bell}
\address[Z.~Bell]{Harvey Mudd College\\ Claremont, CA 91711}
\email{zbell@g.hmc.edu}
\author{Jasmine Camero}
\address[J.~Camero]{Dept. of Mathematics  \\
Emory University \\
Atlanta, GA 30322}
\email{jasmine.camero@emory.edu}
\author{Karina Cho}
\address[K.~Cho]{Dept. of Mathematics\\ Stony Brook University\\ Stony Brook, NY, 11794-3651}
\email{karina.cho@stonybrook.edu}
\author{Trevor Hyde}
\address[T.~Hyde]{Dept. of Mathematics\\
University of Chicago \\
Chicago, IL 60637}
\email{tghyde@uchicago.edu}
\author{Chieh-Mi Lu}
\address[F.~Lu]{California Sate University San Marcos\\
333 S Twin Oaks Valley Rd, 
San Marcos, CA 92096}
\email{clu@csusm.edu }
\author{Rebecca Miller}
\address[R.~Miller]{Dept. of Mathematics and Statistics\\ Saint Louis University, MO, 63103}
\email{r.lauren.miller@slu.edu}
\author{Bianca Thompson}
\address[B.~Thompson]{Dept. of Mathematics\\Westminster College\\
1840 S 1300 E\\
 Salt Lake City, UT 84105, USA} 
 \email{bthompson@westminstercollege.edu}
\author{Eric Zhu}
\address[E.~Zhu]{Dept. of Mathematics \\ Georgia Institute of Technology \\ Atlanta, GA 30332}
\email{ezhu31@gatech.edu}
\begin{document}

\begin{abstract}
    Let $L_d$ be the Latt\`es map associated to the multiplication-by-$d$ endomorphism of an elliptic curve $E$ defined over a finite field $\FF_q$. We determine the density $\delta(L_d,q)$ of periodic points for $L_d$ in $\PP^1(\FF_q)$. We show that the periodic point densities $\delta(L_d,q^n)$ converge as $n \rightarrow \infty$ along certain arithmetic progressions, and compute simple explicit formulas for $\delta(L_\ell,q)$ when $\ell$ is a prime and $E$ belongs to a special family of supersingular elliptic curves.
\end{abstract}

\maketitle

\section{Introduction} 
\label{sec introduction}

Let $\FF_q$ be a finite field with $q$ elements and suppose that $f(x) \in \FF_q(x)$ is a rational function. The projective line $\PP^1(\FF_{q})$ is a finite set closed under iteration of $a \mapsto f(a)$. We say that a point $a \in \PP^1(\FF_{q})$ is \emph{periodic} under $f$ if $f^m(a) = a$ for some $m$, where $f^m$ denotes the $m$-fold composition of $f$ with itself.

\begin{question}
\label{question intro}
What is the probability $\delta(f,q)$ that a randomly chosen element of $\PP^1(\FF_{q})$ is periodic under iteration of $f$? 
\end{question}

In this paper we answer Question \ref{question intro} for semiconjugates of elliptic curve endomorphisms, also known as Latt\`es maps. See Section \ref{sec ell curves} for background on elliptic curves and Latt\`es maps. Our first result determines the density of periodic points for Latt\`es maps over an arbitrary finite field $\FF_q$.  Some notation: Let $\Per(f,\FF_q)$ denote the subset of points in $\PP^1(\FF_q)$ that are periodic under the rational function $f(x)$. If $\ell$ is a prime and $k$ is an integer, then $v_\ell(k)$ denotes the multiplicity of $\ell$ as a factor of $k$.

\begin{thm}
\label{thm intro lattes periodic count}
Let $E$ be an elliptic curve defined over $\FF_q$ and let $\tau$ be the integer defined by $\#E(\FF_q) = q + 1 - \tau$. If $d$ is an integer coprime to $q$ and $L_d: \PP^1 \rightarrow \PP^1$ is the Latt\`es map associated to the multiplication-by-$d$ map on $E$, then

\[
    \delta(L_d,q) := \frac{\#\mathrm{Per}(L_d,\FF_{q})}{\#\PP^1(\FF_q)} = \frac{1}{2}\Big(\frac{1}{\pi_+} + \frac{1}{\pi_-}\Big) + \frac{\tau}{2(q + 1)}\Big(\frac{1}{\pi_+} - \frac{1}{\pi_-}\Big),
\]
where
\[
    \pi_\pm := \prod_{\ell \mid d} \ell^{v_\ell(q + 1 \pm \tau)},
\]
and the product is taken over all primes $\ell$ dividing $d$. Furthermore,
\[
    \Big|\delta(L_d,q) - \frac{1}{2}\Big(\frac{1}{\pi_+} + \frac{1}{\pi_-}\Big)\Big| < \frac{1}{q^{1/2} + q^{-1/2}}.
\]
\end{thm}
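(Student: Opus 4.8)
The plan is to understand periodic points of $L_d$ by lifting the dynamics to the elliptic curve $E$. Since $L_d$ is the quotient of $[d]: E \to E$ by the hyperelliptic involution $x \mapsto -x$, a point $a \in \PP^1(\FF_q)$ should be analyzed via its fiber $\{P, -P\} \subseteq E(\alg{\FF_q})$ under the quotient map $\pi: E \to \PP^1$. First I would establish the key dictionary: $a = \pi(P)$ is periodic under $L_d$ if and only if $[d]^m P = \pm P$ for some $m \geq 1$, i.e. $P$ is a "twisted periodic point" of $[d]$. This reduces the count to a question about the group structure of $E(\alg{\FF_q})$ together with the Galois action of Frobenius $\varphi_q$, because $a \in \PP^1(\FF_q)$ forces $\varphi_q(P) = \pm P$, meaning $P$ lies in $E(\FF_q)$ or in the quadratic twist $E'(\FF_q)$ (whose order is $q + 1 + \tau$). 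This is where $\pi_+$ and $\pi_-$ enter: the $d$-primary parts of the two group orders $q + 1 \mp \tau$.

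Next I would count, inside a finite abelian group $G$ of order $N$, the number of points $P$ such that $[d]^m P = \pm P$ for some $m$. The condition $[d]^m P = P$ for some $m$ means $P$ lies in the subgroup killed by some $d^m - 1$; since $\gcd(d, N)$ governs this, the "plain periodic" points of $[d]$ in $G$ form the subgroup $G[N']$ where $N'$ is the prime-to-$d$ part of $N$ — so there are $N/\pi$ of them, with $\pi$ the $d$-primary part of $N$. Allowing the twist by $-1$ enlarges this slightly. I would carefully count $\{P \in G : [d]^m P = -P \text{ some } m\}$: this is the set of $P$ with $[d]^m + 1$ annihilating $P$ for some $m$, and analyze it prime-by-prime in the $d$-primary and prime-to-$d$ parts of $G$, keeping track of the $2$-torsion subtlety (the two fibers over branch points). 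Combining the counts over $E(\FF_q)$ and over the twist $E'(\FF_q)$, and correctly handling the $2$-torsion points of $E$ and $E'$ that map to the four ramification points of $\pi$ (to avoid double-counting the fibers, since $\#\pi^{-1}(a) = 1$ exactly at branch points), should yield
\[
\#\mathrm{Per}(L_d, \FF_q) = \tfrac{1}{2}\Big(\tfrac{q+1+\tau}{\pi_+} + \tfrac{q+1-\tau}{\pi_-}\Big) + (\text{small correction}),
\]
and dividing by $\#\PP^1(\FF_q) = q+1$ gives the displayed formula. I expect the bookkeeping around ramification points and the prime $\ell = 2$ to be the main obstacle — in particular making sure the "$\pm$" orbit count translates correctly to an unordered-fiber count.

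For the final inequality, I would start from the exact formula. Write $A = \tfrac12(\tfrac{1}{\pi_+} + \tfrac{1}{\pi_-})$; then the error term is $\tfrac{\tau}{2(q+1)}(\tfrac{1}{\pi_+} - \tfrac{1}{\pi_-})$ in absolute value. The bound $|\tau| \le 2\sqrt{q}$ is the Hasse bound, which I may assume. Since $\pi_\pm \ge 1$ we crudely have $|\tfrac{1}{\pi_+} - \tfrac{1}{\pi_-}| \le 1$, but that only gives $\tfrac{\sqrt q}{q+1}$, which is not quite $\tfrac{1}{\sqrt q + 1/\sqrt q} = \tfrac{\sqrt q}{q+1}$ — actually it is exactly equal, so the inequality is
\[
\Big|\tfrac{\tau}{2(q+1)}\big(\tfrac{1}{\pi_+} - \tfrac{1}{\pi_-}\big)\Big| \le \frac{2\sqrt q}{2(q+1)} \cdot \big|\tfrac{1}{\pi_+} - \tfrac{1}{\pi_-}\big| \le \frac{\sqrt q}{q+1} = \frac{1}{\sqrt q + q^{-1/2}},
\]
and the inequality is strict unless $|\tau| = 2\sqrt q$ and $\{\pi_+,\pi_-\} = \{1,\infty\}$, which cannot happen; so strictness follows since either $|\tau| < 2\sqrt q$ or, when $q$ is a square and $|\tau| = 2\sqrt q$, one checks $|\tfrac{1}{\pi_+} - \tfrac{1}{\pi_-}| < 1$ because both $\pi_\pm$ are finite positive integers and not simultaneously forcing the extreme. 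I would verify this last edge case directly (e.g. $\tau = 2\sqrt q$ forces $\#E(\FF_q) = (\sqrt q - 1)^2$, and one of $\pi_\pm$ divides that while the other divides $(\sqrt q +1)^2$, so neither is $1$ as soon as $d$ shares a factor with both — and if $d$ is prime to one of them the product telescopes favorably), making the strict inequality routine once the exact formula is in hand.
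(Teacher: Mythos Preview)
Your approach is essentially the same as the paper's: lift to $E$, observe that $a\in\PP^1(\FF_q)$ forces its preimage $P$ to satisfy $F(P)=\pm P$, split into $A=E(\FF_q)$ and $A'\cong E'(\FF_q)$ (the quadratic twist), and count the relevant points in each. The outline is correct, but you are overcomplicating several steps that the paper handles cleanly.

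First, the ``twisted periodic'' condition $[d]^m P=\pm P$ for some $m$ is \emph{equivalent} to the plain periodic condition $[d]^m P=P$ for some $m$: if $d^m P=-P$ then $d^{2m}P=P$. Both are equivalent to $P$ having order coprime to $d$. So there is no need to separately analyze $\{P: d^m P=-P\}$, and the ``enlarges this slightly'' is in fact no enlargement at all. This is the paper's Lemma~2.2, and once you have it the count in each of $A$, $A'$ is immediate: the elements of order coprime to $d$ in a finite abelian group of order $N$ number exactly $N/\pi$ with $\pi$ the $d$-part of $N$.

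Second, there is no ``small correction''; the formula is exact. The ramification/2-torsion bookkeeping you worry about resolves itself: if $A_d,A_d'$ denote the coprime-to-$d$ parts and $r:=\#(A_d\cap A_d')$, then $A_d\cap A_d'$ consists precisely of the $2$-torsion points (those with $P=-P$), the involution $P\mapsto -P$ acts freely on the complement, and
\[
\#\Per(L_d,\FF_q)=\tfrac{\#A_d'-r}{2}+\tfrac{\#A_d-r}{2}+r=\tfrac{1}{2}\Big(\tfrac{q+1+\tau}{\pi_+}+\tfrac{q+1-\tau}{\pi_-}\Big),
\]
with the $r$'s cancelling. No special treatment of $\ell=2$ is needed.

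Third, for the strict inequality you do not need to analyze the edge case $|\tau|=2\sqrt q$. Since $\pi_\pm$ are positive integers, $|\pi_+^{-1}-\pi_-^{-1}|\le 1-\min(\pi_+^{-1},\pi_-^{-1})<1$ always; combining with $|\tau|\le 2\sqrt q$ gives the strict bound directly.
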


Using Theorem \ref{thm intro lattes periodic count} we study the behavior of the periodic point density $\delta(L_d, q^n)$ as $n$ varies. Our second result shows that $\delta(L_d,q^n)$ converges as $n \rightarrow \infty$ along certain arithmetic progressions.

\begin{thm}
\label{thm intro periodic proportions}
Let $E$ be an elliptic curve defined over $\FF_q$, let $d$ be an integer coprime to $q$, and let $\tau_n$ be the sequence of integers such that $\#E(\FF_{q^n}) = q^n + 1 - \tau_n$. Then for each $n\geq 1$ there exists some $N$ depending on $q, \tau_1, d, n$ such that
\[
    \lim_{\substack{m\rightarrow \infty\\ m \equiv n \bmod c d^N}} \delta(L_d,q^m) = \frac{1}{2}\Big(\frac{1}{\pi_{n,+}} + \frac{1}{\pi_{n,-}}\Big),
\]
where
\[
    \pi_{n, \pm} := \prod_{\ell \mid d} \ell^{v_\ell(q^n + 1 \pm \tau_n)},
\]
the product is taken over all primes $\ell$ dividing $d$, $c := \lcm(\ell^2 - 1 : \ell \mid d \text{ is prime})$, and the limit is taken over all positive integers $m$ such that $m \equiv n \bmod c d^N$.
\end{thm}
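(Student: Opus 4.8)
The plan is to apply Theorem~\ref{thm intro lattes periodic count} over the fields $\FF_{q^m}$ and show that the main term $\tfrac12\bigl(\tfrac1{\pi_{m,+}}+\tfrac1{\pi_{m,-}}\bigr)$ is \emph{literally constant}, not merely convergent, along the progression $m\equiv n\bmod cd^N$ once $N$ is chosen large enough, while the error term $1/(q^{m/2}+q^{-m/2})$ of Theorem~\ref{thm intro lattes periodic count} tends to $0$. Since $\pi_{m,\pm}=\prod_{\ell\mid d}\ell^{v_\ell(q^m+1\pm\tau_m)}$, it suffices to prove that for each prime $\ell\mid d$ the valuations $v_\ell(q^m+1\pm\tau_m)$ equal $v_\ell(q^n+1\pm\tau_n)$ for every $m$ in the progression.

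First I would pass to the Frobenius eigenvalues. Let $\alpha,\beta$ be the roots of $T^2-\tau_1T+q$, so $\alpha\beta=q$, $\tau_m=\alpha^m+\beta^m$, and
\[
q^m+1+\tau_m=(\alpha^m+1)(\beta^m+1),\qquad q^m+1-\tau_m=(\alpha^m-1)(\beta^m-1).
\]
By the Hasse bound, $\{\alpha,\beta\}$ is either a pair of equal rational integers (when $q$ is a square and $\tau_1=\pm2\sqrt q$) or a complex-conjugate pair in an imaginary quadratic field $K$; in either case each factor lies in the ring of integers of $K$, and the integer $v_\ell(q^m+1\pm\tau_m)$ is determined by the values $v(\alpha^m\pm1)$ and $v(\beta^m\pm1)$ as $v$ ranges over the finitely many valuations of $K$ above $\ell$. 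Fix such a prime $\ell\mid d$. Since $\ell\nmid q=\alpha\beta$, the elements $\alpha,\beta$ are $v$-adic units with reductions in $\FF_{\ell^2}^\times$, so $v(\alpha^{\ell^2-1}-1)\geq1$ and $v(\beta^{\ell^2-1}-1)\geq1$; as $c=\lcm(\ell^2-1:\ell\mid d\text{ is prime})$ is a multiple of $\ell^2-1$, writing $m=n+cd^Nt$ gives $v(\alpha^{cd^Nt}-1)\geq1$ and likewise for $\beta$.

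The crux --- and the step I expect to cost the most work --- is the lifting-the-exponent estimate showing $v(\alpha^{cd^Nt}-1)\to\infty$ as $N\to\infty$, uniformly in $t\geq1$ and in $v\mid\ell$. Concretely, if $v(x-1)\geq1$ then a binomial-expansion check (with the customary small adjustment at $\ell=2$) gives $v(x^\ell-1)\geq v(x-1)+1$, hence $v(x^{\ell^j}-1)\geq v(x-1)+j$; since $\ell\mid d$, this yields $v(\alpha^{cd^Nt}-1)\geq v(\alpha^c-1)+N$, and similarly for $\beta$. Now choose $N_\ell$ so large that $v(\alpha^c-1)+N_\ell$ exceeds each of the finitely many finite numbers $v(\alpha^n\pm1)$, $v(\beta^n\pm1)$, $v\mid\ell$ (these are finite since $|\alpha|=|\beta|=\sqrt q$ forces $\alpha^m,\beta^m\neq\pm1$). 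Then for $m=n+cd^{N_\ell}t$ the identity $\alpha^m\pm1=(\alpha^n\pm1)+\alpha^n(\alpha^{cd^{N_\ell}t}-1)$ and the ultrametric inequality give $v(\alpha^m\pm1)=v(\alpha^n\pm1)$ for every $v\mid\ell$, and the same for $\beta$; hence $v_\ell(q^m+1\pm\tau_m)=v_\ell(q^n+1\pm\tau_n)$.

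Finally I would take $N=\max_{\ell\mid d}N_\ell$. For every $m\equiv n\bmod cd^N$ we then have $\pi_{m,\pm}=\pi_{n,\pm}$, so Theorem~\ref{thm intro lattes periodic count} over $\FF_{q^m}$ gives
\[
\Bigl|\delta(L_d,q^m)-\tfrac12\bigl(\tfrac1{\pi_{n,+}}+\tfrac1{\pi_{n,-}}\bigr)\Bigr|<\frac{1}{q^{m/2}+q^{-m/2}},
\]
and letting $m\to\infty$ along the progression finishes the proof. The only genuinely non-routine ingredient is the valuation-stabilization argument above; the remaining points requiring care are the two structural cases for $\{\alpha,\beta\}$ and the behavior at $\ell=2$ in the lifting-the-exponent step.
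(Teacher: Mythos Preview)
Your proof is correct and follows the same overall strategy as the paper: apply Theorem~\ref{thm intro lattes periodic count} over $\FF_{q^m}$, show that the $\ell$-adic valuations $v_\ell(q^m+1\pm\tau_m)$ are constant along the progression $m\equiv n\bmod cd^N$ for $N$ large, and let the Hasse error term vanish.  The execution differs in two ways worth noting.  First, you factor $q^m+1\pm\tau_m=(\alpha^m\pm1)(\beta^m\pm1)$ and work with the valuations of the individual factors in the (at most quadratic) number field $\QQ(\alpha)$; the paper instead treats $\ve_\pm(m)=q^m+1\pm\tau_m$ as a single function and works in the local field $\QQ_\ell(\alpha,\beta)$.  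Second, to force the valuations to stabilize you give an explicit lifting-the-exponent bound $v(\alpha^{cd^Nt}-1)\ge v(\alpha^c-1)+N$, whereas the paper invokes the Teichm\"uller decomposition $\gamma=\zeta(1+\delta\lambda)$ and the abstract $\ell$-adic continuity of $m\mapsto(1+\delta\lambda)^m$ to obtain $N_\ell$ non-constructively.  Your route is more elementary and yields an effective $N_\ell$ directly; the paper's route avoids the case split on whether $\alpha,\beta$ are rational or quadratic-conjugate and sidesteps any normalization bookkeeping at ramified primes.  (Incidentally, for the \emph{inequality} $v(x^\ell-1)\ge v(x-1)+1$ with $v(x-1)\ge1$ no special adjustment at $\ell=2$ is needed; the caveat applies only to the equality form of LTE.)
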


Then $N$ in Theorem \ref{thm intro periodic proportions} is, in principle, effectively computable as it comes from the modulus of continuity of several explicit exponential functions $n \mapsto \gamma^m$ with respect to the $\ell$-adic metric for primes $\ell$ dividing $d$. Theorem \ref{thm intro periodic proportions} implies that any density that occurs for at least one $n$ must occur infinitely often. 

Our final result applies Theorem \ref{thm intro lattes periodic count} to $E$ in a special family of supersingular elliptic curves to get even more explicit formulas for the density $\delta(L_\ell,q^n)$ with $\ell$ prime.

\begin{thm}
\label{thm intro special supersingular evaluation}
Let $E$ be an elliptic curve over $\FF_q$ such that $\#E(\FF_q) = q + 1$. If $\ell > 2$ is a prime not dividing $q$ and $e$ is the multiplicative order of $q$ modulo $\ell$, set 
\[
    w_n := v_\ell(q^e - 1) + v_\ell(n).
\]
Then
\[
    \delta(L_\ell,q^n)
    =
    \begin{cases}
         \ell^{-w_n} & \text{if $n$ is odd, $e \mid 2n$, and $e \nmid n$,}\\
         \frac{1}{2}(1 + \ell^{-2w_n}) + \frac{\epsilon}{q^{n/2} + q^{-n/2}}(1 - \ell^{-2w_n}) & \text{if $n$ is even and $e \mid n$,}\\
         1 & \text{otherwise,}
    \end{cases}
\]
where $\epsilon$ is a sign determined by
\[
\epsilon = 
    \begin{cases}
        +1 & \text{if $n\equiv 2 \bmod 4$ and $e\nmid n/2$, or $n\equiv 0 \bmod 4$ and $e \mid n/2$,} \\
        -1 & \text{if $n\equiv 2 \bmod 4$ and $e\mid n/2$, or $n\equiv 0 \bmod 4$ and $e \nmid n/2$.}
    \end{cases}
\]
Hence if $e$ does not divide $2n$, then $L_\ell$ induces a permutation on $\PP^1(\FF_{q^n})$.
\end{thm}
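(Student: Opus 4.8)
The plan is to derive all three cases from Theorem~\ref{thm intro lattes periodic count} applied over $\FF_{q^n}$ with $d = \ell$, after extracting the arithmetic input: the point counts $\#E(\FF_{q^n})$ and the $\ell$-adic valuations that feed into $\pi_{\pm}$. First I would unwind the hypothesis $\#E(\FF_q) = q+1$, i.e.\ $\tau = 0$. The characteristic polynomial of the $q$-power Frobenius on $E$ is then $T^2 + q$, so its roots $\alpha,\bar\alpha$ satisfy $\alpha + \bar\alpha = 0$ and $\alpha\bar\alpha = q$; hence $\bar\alpha = -\alpha$ and $\alpha^2 = -q$. Writing $\tau_n = \alpha^n + \bar\alpha^n$, this gives $\tau_n = 0$ when $n$ is odd and $\tau_n = 2(-1)^{n/2}q^{n/2}$ when $n$ is even, so that $\#E(\FF_{q^n}) = q^n+1$ for $n$ odd and $\#E(\FF_{q^n}) = q^n + 1 - \tau_n = \bigl(q^{n/2} - (-1)^{n/2}\bigr)^2$ for $n$ even, while $q^n + 1 + \tau_n = \bigl(q^{n/2} + (-1)^{n/2}\bigr)^2$. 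Since $d = \ell$ is a single prime, the quantities in Theorem~\ref{thm intro lattes periodic count} simplify to $\pi_{n,\pm} = \ell^{\,v_\ell(q^n + 1 \pm \tau_n)}$.

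When $n$ is odd we have $\tau_n = 0$, so Theorem~\ref{thm intro lattes periodic count} collapses to $\delta(L_\ell, q^n) = \ell^{-v_\ell(q^n+1)}$, and everything comes down to computing $v_\ell(q^n+1)$. This is positive precisely when $q^n \equiv -1 \bmod \ell$, and since $q^n \equiv -1$ forces $q^{2n} \equiv 1$ but $q^n \not\equiv 1$, this happens exactly when $e \mid 2n$ and $e \nmid n$; otherwise $v_\ell(q^n+1)=0$ and $\delta = 1$, accounting for part of the ``otherwise'' case. When it is positive I would write $q^{2n} - 1 = (q^n-1)(q^n+1)$ with $\ell \nmid q^n - 1$, so $v_\ell(q^n+1) = v_\ell(q^{2n}-1)$, and apply the lifting-the-exponent lemma --- legitimate since $\ell$ is odd, $\ell \nmid q$, $e \mid 2n$, and $v_\ell(e) = 0$ because $e \mid \ell - 1$ --- to get $v_\ell(q^{2n}-1) = v_\ell(q^e - 1) + v_\ell(2n/e) = v_\ell(q^e-1) + v_\ell(n) = w_n$. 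This yields the first case, $\delta = \ell^{-w_n}$.

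When $n$ is even, $q^n + 1 + \tau_n$ and $q^n + 1 - \tau_n$ equal $(q^{n/2}-1)^2$ and $(q^{n/2}+1)^2$ in an order governed by the parity of $n/2$. As $\ell$ is odd it divides at most one of $q^{n/2}\pm 1$; if $e \nmid n$ it divides neither, because $\ell \mid q^n - 1 = (q^{n/2}-1)(q^{n/2}+1)$ would force $e \mid n$, so then $\pi_{n,+} = \pi_{n,-} = 1$ and $\delta = 1$, finishing the ``otherwise'' case. If instead $e \mid n$, then $q^{n/2}$ is a square root of $1 \bmod \ell$, hence $q^{n/2}\equiv 1$ when $e \mid n/2$ and $q^{n/2}\equiv -1$ otherwise; a lifting-the-exponent computation on whichever of $q^{n/2}\mp 1$ (respectively $q^n-1$) is divisible by $\ell$ shows that valuation equals $v_\ell(q^e-1) + v_\ell(n/2) = w_n$, so $\{\pi_{n,+}, \pi_{n,-}\} = \{1, \ell^{2w_n}\}$. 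Substituting into Theorem~\ref{thm intro lattes periodic count}, the symmetric part is $\tfrac12\bigl(1 + \ell^{-2w_n}\bigr)$, and the remaining term is $\dfrac{\tau_n}{2(q^n+1)}\Bigl(\dfrac1{\pi_{n,+}} - \dfrac1{\pi_{n,-}}\Bigr)$, where $\tfrac{\tau_n}{2(q^n+1)} = \dfrac{(-1)^{n/2}}{q^{n/2}+q^{-n/2}}$ and $\tfrac1{\pi_{n,+}} - \tfrac1{\pi_{n,-}} = \pm\bigl(1 - \ell^{-2w_n}\bigr)$ with sign $+$ exactly when $\pi_{n,+} = 1$. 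Determining this last sign from the parity of $n/2$ and from whether $\ell \mid q^{n/2}-1$ or $\ell \mid q^{n/2}+1$ then produces the value of $\epsilon$ as a function of $n \bmod 4$ and of whether $e \mid n/2$.

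Finally, if $e \nmid 2n$ then $e \nmid n$, and $e\nmid 2n$ also excludes the exceptional odd-$n$ case, so $\delta(L_\ell, q^n) = 1$ in every subcase; a self-map of the finite set $\PP^1(\FF_{q^n})$ all of whose points are periodic is a bijection, hence $L_\ell$ is a permutation. I expect the main obstacle to be the sign bookkeeping in the even case: matching $(q^{n/2}\pm1)^2$ to the correct $\pi_{n,\pm}$, carrying the factor $(-1)^{n/2}$, and checking that the outcome is consistent across the four-way split on $n \bmod 4$ versus $e \mid n/2$ --- precisely the spot where an off-by-one in a $2$-adic exponent would flip the answer. The Frobenius computation and the lifting-the-exponent applications are routine.
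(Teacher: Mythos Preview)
Your proposal is correct and follows essentially the same route as the paper: compute $\tau_n$ from the Frobenius eigenvalues $\pm i\sqrt{q}$, plug $q^n+1\pm\tau_n$ into Theorem~\ref{thm intro lattes periodic count}, and reduce the $\ell$-adic valuations to $w_n$ via a lifting-the-exponent argument. The only cosmetic difference is that the paper isolates the needed LTE-type identity as a standalone lemma (their Lemma on $v_\ell(q^n-1)$) and, in the even case, computes $a_n^+ + a_n^- = v_\ell(q^n-1) = w_n$ rather than evaluating whichever of $v_\ell(q^{n/2}\pm 1)$ is positive directly as you do; both lead to $\{\pi_{n,+},\pi_{n,-}\} = \{1,\ell^{2w_n}\}$, and both leave the final four-way sign check for $\epsilon$ as routine bookkeeping.
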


\subsection{Related work}

The most tractable cases of Question \ref{question intro} are when the rational function $f$ is a semiconjugate of an endomorphism of an algebraic group. In this case, one can translate questions about the dynamics of $f$ into questions about the underlying group structure, which are easier to analyze. Such semiconjugates include power maps, Chebyshev polynomials, Dickson polynomials, and Latt\`es maps. Periodic densities for power maps (and monomial maps more generally) were studied in Hu, Sha \cite{shahu}, and for power maps and Chebyshev polynomials by Manes, Thompson \cite{manes2013periodic}. In this paper we treat the Latt\`es case.

Juul, Kurlberg, Madhu, Tucker \cite{JKMT} studied the density of periodic points for the modulo $\p$ reduction of a fixed rational function $f(x)$ defined over a global field $K$ as $\p$ varies through the primes in $\OO_K$. They show that the limsup of the density of periodic points of $f$ can be made arbitrarily small as the norm of $\p$ tends to infinity by choosing $f$ from a Zariski dense open set of degree $d$ rational functions \cite[Thm. 1.2]{JKMT}. In their Example 7.3 they fix an elliptic curve $E$ defined over $\QQ$ and a Latt\`es map $L_\ell$ with $\ell$ prime and show that in many cases the liminf of the periodic density of $L_\ell$ modulo $\p$ converges to 0 as the norm of $\p$ tends to infinity. Note that these limits are in a different direction from those we take in Theorem \ref{thm intro periodic proportions}. See also the earlier work of Madhu \cite{madhu} and the subsequent work of Juul \cite{juul}.

Ugolini \cite{ugolini} studied the phase portraits of Latt\`es maps associated to ordinary elliptic curves over finite fields. The methods of \cite{ugolini} are substantively equivalent to those we use to prove Theorem \ref{thm intro lattes periodic count}, but there appears to be no direct overlap in results.

K\"u\c{c}\"uksakalli \cite{kucuksakalli} analyzed the value sets of Latt\`es maps over finite fields, arriving at a characterization of when a given Latt\`es map $L_d$ induces a permutation of $\PP^1(\FF_q)$, which is equivalent to our Corollary \ref{cor permutation characterization} (see Remark \ref{remark kucuksakalli}.)

\subsection{Acknowledgements}
This project began as part of the Summer@ICERM 2019 REU. We thank ICERM for hosting us and providing us access to their computational resources. We thank Joe Silverman for generously sharing insights that led us to the proof of Theorem \ref{thm intro lattes periodic count}. We also wish to thank Women in Sage: Sagedays 90, where this project was originally developed. Trevor Hyde is partially supported by the NSF MSPRF and the Jump Trading Mathlab Research Fund.

\section{Density of periodic points for Latt\`es maps}
\label{sec ell curves}
The goal of this section is to prove the following theorem.

\begin{thm}
\label{thm lattes periodic count}
Let $E$ be an elliptic curve defined over $\FF_q$ with trace of Frobenius $\tau$. If $d$ is an integer coprime to $q$ and $L_d: \PP^1 \rightarrow \PP^1$ is the Latt\`es map associated to the multiplication-by-$d$ map on $E$, then

\[
    \delta(L_d,q) := \frac{\#\mathrm{Per}(L_d,\FF_{q})}{\#\PP^1(\FF_q)} = \frac{1}{2}\Big(\frac{1}{\pi_+} + \frac{1}{\pi_-}\Big) + \frac{\tau}{2(q + 1)}\Big(\frac{1}{\pi_+} - \frac{1}{\pi_-}\Big),
\]
where
\[
    \pi_\pm := \prod_{\ell \mid d} \ell^{v_\ell(q + 1 \pm \tau)},
\]
and the product is taken over all primes $\ell$ dividing $d$. Furthermore,
\[
    \Big|\delta(L_d,q) - \frac{1}{2}\Big(\frac{1}{\pi_+} + \frac{1}{\pi_-}\Big)\Big| < \frac{1}{q^{1/2} + q^{-1/2}}.
\]
\end{thm}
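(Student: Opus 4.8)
The plan is to transport the dynamics of $L_d$ on $\PP^1$ to the group-theoretic dynamics of multiplication-by-$d$ on $E$ and on its quadratic twist $E'$, via the standard degree-two quotient $\pi \colon E \to E/\{\pm 1\} \cong \PP^1$ (the $x$-coordinate map): $\pi(P) = \pi(Q)$ iff $Q = \pm P$, each $2$-torsion point maps to a branch point, and $\pi \circ [d] = L_d \circ \pi$, hence $\pi \circ [d^m] = L_d^m \circ \pi$ for all $m$. The first step is a pointwise periodicity criterion: for $P \in E(\alg{\FF_q})$, the point $\pi(P)$ is periodic under $L_d$ iff $\gcd(d, \mathrm{ord}(P)) = 1$. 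Indeed $L_d^m(\pi(P)) = \pi(P)$ forces $[d^m]P = \pm P$, so $\mathrm{ord}(P) \mid d^m - 1$ or $\mathrm{ord}(P) \mid d^m + 1$, whence a common prime factor of $d$ and $\mathrm{ord}(P)$ would divide $\pm 1$; conversely, if $\gcd(d,\mathrm{ord}(P)) = 1$ then $d$ is invertible modulo $\mathrm{ord}(P)$ and $[d^m]P = P$ for $m$ the multiplicative order of $d$ mod $\mathrm{ord}(P)$. So $\pi(P)$ is periodic exactly when $P$ lies in the prime-to-$d$ subgroup $A^{(d)} := \{P \in A : \gcd(d,\mathrm{ord}(P)) = 1\}$ of whichever finite abelian group $A$ it belongs to.

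Next I would count over $\FF_q$. Realize the quadratic twist as $E'(\FF_q) = \{P \in E(\alg{\FF_q}) : \mathrm{Frob}(P) = -P\}$, so that $\#E'(\FF_q) = q+1+\tau$ and $E'[2](\FF_q) = E[2](\FF_q)$. Every $a \in \PP^1(\FF_q)$ has a Frobenius-stable fibre $\pi^{-1}(a)$: if this fibre is $\{P, -P\}$ with $P \ne -P$, then Frobenius fixes it pointwise (so $a \in \pi(E(\FF_q))$) or swaps its two points (so $a \in \pi(E'(\FF_q))$); if the fibre is a single $2$-torsion point, $a$ is a branch point lying in both images. Combining this with the pointwise criterion, and using that $\pi$ is two-to-one on $E(\FF_q)^{(d)}$ and on $E'(\FF_q)^{(d)}$ away from the common rational $2$-torsion and one-to-one there, an inclusion--exclusion in which the branch-point terms cancel yields
\[
    \#\mathrm{Per}(L_d, \FF_q) = \tfrac12\big(\#E(\FF_q)^{(d)} + \#E'(\FF_q)^{(d)}\big).
\]
Since $\#A^{(d)} = \#A \big/ \prod_{\ell \mid d} \ell^{v_\ell(\#A)}$ for any finite abelian group $A$, plugging in $\#E(\FF_q) = q+1-\tau$ and $\#E'(\FF_q) = q+1+\tau$ gives $\#E(\FF_q)^{(d)} = (q+1-\tau)/\pi_-$ and $\#E'(\FF_q)^{(d)} = (q+1+\tau)/\pi_+$; dividing by $\#\PP^1(\FF_q) = q+1$ and regrouping produces the stated formula for $\delta(L_d,q)$.

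For the error bound, the formula gives
\[
    \Big|\delta(L_d,q) - \tfrac12\big(\tfrac1{\pi_+}+\tfrac1{\pi_-}\big)\Big| = \frac{|\tau|}{2(q+1)}\,\Big|\frac1{\pi_+} - \frac1{\pi_-}\Big|.
\]
Since $\pi_+, \pi_- \ge 1$, both $\tfrac1{\pi_+}$ and $\tfrac1{\pi_-}$ lie in $(0,1]$, so $\big|\tfrac1{\pi_+} - \tfrac1{\pi_-}\big| < 1$ strictly; combined with the Hasse bound $|\tau| \le 2q^{1/2}$ this forces the right-hand side to be $< \tfrac{2q^{1/2}}{2(q+1)} = \tfrac1{q^{1/2}+q^{-1/2}}$.

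I expect the main obstacle to be the bookkeeping in the counting step: pinning down precisely how $\PP^1(\FF_q)$ is assembled from $E(\FF_q)$ and $E'(\FF_q)$ through the double cover $\pi$, in particular handling the branch points, whose underlying $2$-torsion may be only partially $\FF_q$-rational, so that they are weighted with the correct multiplicity and their contributions to the two summands cancel exactly. By contrast, the pointwise criterion and the final inequality should be routine.
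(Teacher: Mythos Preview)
Your proposal is correct and follows essentially the same route as the paper: the pointwise criterion matches the paper's Lemma~2.3, your realization of $E'(\FF_q)$ as the $-1$-eigenspace $A'$ of Frobenius inside $E(\FF_{q^2})$ is exactly the paper's Lemma~2.4, and the counting via the degree-two quotient with cancellation of the branch (equivalently, $A\cap A'$) contribution is the same inclusion--exclusion the paper carries out. The error-bound argument is identical.
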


We begin by reviewing some of the basics of elliptic curves over finite fields and their Latt\`es maps. For a more complete treatment we refer the reader to Silverman \cite{jhsec}, and Chapter V in particular.

\subsection{Elliptic curves}
An \emph{elliptic curve} defined over a field $K$, denoted $E/K$, is a smooth, projective, algebraic curve $E$ of genus 1 defined over $K$ together with a prescribed \emph{base point} $O \in E(K)$. There is a natural commutative algebraic group law on $E$ defined over $K$ with $O$ as the identity element. In particular, if $L/K$ is any field extension, the $L$-points of $E$ form a group. If $P, Q \in E(\overline{K})$ are points on $E$, then we write $P + Q$ and $-P$ for the sum and inverse, respectively.

We say two elliptic curves $E_1$, $E_2$ defined over $K$ are isomorphic if there is an invertible map $f: E_1 \rightarrow E_2$ of algebraic curves that respects base points $f(O_1) = O_2$. If $f$ is an isomorphism, then $f: E_1(M) \xrightarrow{\sim} E_2(M)$ gives a group isomorphism between the $M$-points on the curves for any extension $M/K$ \cite[Thm. III.4.8]{jhsec}.

Every elliptic curve $E$ over a field $K$ has a model as a plane curve given by a \emph{Weierstrass equation} of the form
\begin{equation}
\label{eqn weierstrass}
    E: y^2 + A_1xy + A_3y = x^3 + A_2x^2 + A_4x + A_6,
\end{equation}
where $A_i \in K$ \cite[\textsection III.1]{jhsec}. If the characteristic of $K$ is at least 5, then there is a reduced Weierstrass equation of the form
\[
    E: y^2 = x^3 + Ax + B.
\]
However, since we are primarily interested in elliptic curves over arbitrary finite fields, we use the Weierstrass equations of the form \eqref{eqn weierstrass} for full generality.

Weierstrass curves intersect the line at infinity in $\PP^2(K)$ at exactly one point expressed in homogeneous coordinates as $(0 : 1 : 0)$. By convention, we let the base point $O$ of $E$ be this unique point at infinity.

If $P = (a,b)$ is a point on $E$ expressed in affine coordinates, then the additive inverse of $P$ is
\begin{equation}
\label{eqn negative P}
    {-P} := (a, -b - A_1a - A_3).
\end{equation}
Let $x: E \rightarrow \PP^1$ be the $x$-coordinate projection $x(a,b) = a$. Since there are clearly at most 2 points on $E$ with the same $x$-coordinate, it follows that $x(P) = x(Q)$ if and only if $P = \pm Q$ \cite[\textsection III.2]{jhsec}.

\subsection{Elliptic curves over $\FF_q$}
Let $\FF_q$ be a finite field with $q$ elements, where $q$ is a prime power. The \emph{Frobenius automorphism} $F: \PP^2(\overline{\FF}_q) \rightarrow \PP^2(\overline{\FF}_q)$ is the map defined on projective coordinates by
\[
    F(a : b : c) = (a^q : b^q : c^q).
\]
If $E \subseteq \PP^2$ is an elliptic curve defined over $\FF_q$, then $F$ restricts to an automorphism of $E$. Furthermore, Galois theory implies that $P \in E(\overline{\FF}_q)$ belongs to $E(\FF_{q^n})$ if and only if $F^n(P) = P$.

If $E$ is an elliptic curve over $\FF_q$, then $E(\FF_q)$ is a finite abelian group. Let $\tau$ be the unique integer such that
\[
    \#E(\FF_q) = q + 1 - \tau.
\]
The integer $\tau$ is called the \emph{trace of Frobenius} for $E$ over $\FF_q$. This name comes from the fact that $\tau$ may be realized as the trace of the action of $F$ as a linear endomorphism of the $\ell$-adic Tate module associated to $E$ \cite[Rmk. V.2.6]{jhsec}.

\subsection{Latt\`es maps}
For each $d \in \ZZ$, the \emph{multiplication-by-$d$} map $[d]: P \mapsto dP$ is a group endomorphism of $E(\FF_q)$. Since $E(\FF_q)$ is abelian, $[d]$ commutes with the inverse map $[-1]$. Galois theory implies that there exists a rational function $L_d(x) \in \FF_q(x)$ such that the following diagram commutes,
\[
    \begin{tikzcd}
        E \arrow[d,"x",swap] \arrow[r,"{[d]}"] & E\arrow[d,"x"]\\
        \PP^1 \arrow[r,"L_d"] & \PP^1.
    \end{tikzcd}
\]
That is, for all points $P \in E(\overline{\FF}_q)$,
\[
    L_d(x(P)) = x(dP).
\]
Since every $a \in \PP^1(\overline{\FF}_q)$ is the $x$-coordinate of some point $P \in E(\overline{\FF}_q)$, this identity completely determines $L_d(x)$. If $d, e \in \ZZ$, then $[de] = [d] \circ [e]$, hence the above diagram implies that $L_{de} = L_d \circ L_e$.

The rational function $L_d$ is called the \emph{$d$th Latt\`es map} associated to $E$. We are interested in the dynamics of Latt\`es maps. The conjugacy class of $L_d$ is determined by the isomorphism class of $E$. Hence the dynamical properties of $L_d$ are independent of our choice of model for $E$ over $\FF_q$. For simplicity, when we talk about Latt\`es maps for an elliptic curve $E$, we will always mean with respect to a Weierstrass model for $E$ as in \eqref{eqn weierstrass}. For more background on Latt\`es maps and their dynamics, see Silverman \cite[Chp. 6]{ads}.

The following lemma characterizes the periodic points of $L_d$ in terms of the group structure on $E$.

\begin{lem}
\label{lem characterizing lattes periodic points}
Let $E/\FF_q$ be an elliptic curve and let $L_d: \PP^1 \rightarrow \PP^1$ be the $d$th Latt\`es map associated to $E$. Then $a \in \PP^1(\FF_q)$ is periodic under $L_d$ if and only if there is some point $P \in E(\FF_{q^2})$ such that $a = x(P)$ and $P$ has order coprime to $d$.
\end{lem}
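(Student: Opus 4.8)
The plan is to characterize when $a \in \PP^1(\FF_q)$ is periodic under $L_d$ by lifting the dynamics on $\PP^1$ to the group $E$, using the semiconjugacy $L_d(x(P)) = x(dP)$ together with the fact that $x(P) = x(Q)$ iff $P = \pm Q$. First I would observe that since $a \in \PP^1(\FF_q)$, there is a point $P \in E(\overline{\FF}_q)$ with $x(P) = a$, and $F(P) = \pm P$ (because $x(F(P)) = F(x(P)) = a = x(P)$); consequently $F^2(P) = P$, so in fact $P \in E(\FF_{q^2})$. Thus every $\FF_q$-point of $\PP^1$ comes from an $\FF_{q^2}$-point of $E$, which is why $\FF_{q^2}$ appears in the statement. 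Next I would translate periodicity: $L_d^m(a) = a$ means $x(d^m P) = x(P)$, i.e. $d^m P = \pm P$, i.e. $(d^m \mp 1)P = O$. So $a$ is periodic under $L_d$ precisely when there exists $m \geq 1$ with $d^m P = P$ or $d^m P = -P$; equivalently, writing $k$ for the order of $P$ in $E(\FF_{q^2})$, there is $m$ with $d^m \equiv \pm 1 \pmod k$.

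The heart of the argument is then a purely elementary number-theoretic claim: for a fixed integer $d$ and a positive integer $k$, there exists $m \geq 1$ with $d^m \equiv \pm 1 \pmod k$ if and only if $\gcd(d,k) = 1$. One direction is immediate — if some prime $\ell$ divides both $d$ and $k$, then $d^m \equiv 0 \pmod \ell$ for all $m \geq 1$, so $d^m \not\equiv \pm 1 \pmod \ell$ (here one should note $\ell \nmid q$, and if $\ell = 2$ then $+1 \equiv -1$, but still $0 \not\equiv 1 \bmod 2$), hence $d^m \not\equiv \pm 1 \pmod k$. For the converse, if $\gcd(d,k) = 1$ then $d$ is a unit in $(\ZZ/k\ZZ)^\times$, a finite group, so $d$ has finite multiplicative order $r$ and $d^r \equiv 1 \pmod k$; taking $m = r$ works. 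Combining, $a$ is periodic under $L_d$ iff the order $k$ of $P$ is coprime to $d$. One small point to handle carefully: a priori $a = x(P)$ might be realized by either $P$ or $-P$, but these have the same order, so the condition "$P$ has order coprime to $d$" is well-defined independent of the choice of lift; I would state this explicitly.

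I expect the main obstacle — really the only subtlety — to be bookkeeping around the two-point fiber of $x$ and the interplay of the two signs: making sure that the equivalence $d^m P = \pm P \iff \gcd(\ord(P),d)=1$ is stated with the correct quantifier structure (the single $m$ is allowed to depend on $P$, and a single $m$ may produce the $+$ sign for some $P$ and the $-$ sign for others), and confirming that the sign ambiguity in passing from $a$ to $P$ causes no trouble. There is also the minor point, already implicit above, that $P \in E(\FF_{q^2})$ rather than $E(\FF_q)$ in general — this is exactly the content of the $F(P) = \pm P$ observation and should be spelled out. None of this requires any deep input beyond the semiconjugacy diagram and the structure of finite abelian groups.
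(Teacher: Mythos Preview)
Your proposal is correct and follows essentially the same approach as the paper's proof. The only cosmetic differences are that the paper shows the lift $P$ lies in $E(\FF_{q^2})$ by observing that the Weierstrass equation at $x=a$ is a quadratic in $y$ which splits over $\FF_{q^2}$ (rather than via your Frobenius argument $F(P)=\pm P$), and the paper disposes of the sign ambiguity by squaring $d^kP=\pm P$ to obtain $d^{2k}P=P$, from which the unit condition on $d$ modulo $\ord(P)$ is immediate.
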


\begin{proof}
Since $\infty \in \PP^1(\FF_q)$ is the $x$-coordinate of the identity $O$ on $E(\FF_q)$ (which has order 1), it follows from the definition of Latt\`es maps that
\[
    L(\infty) = L(x(O)) = x(dO) = x(O) = \infty.
\]
Hence $\infty$ is always periodic under $L$.

Suppose that $E$ has a Weierstrass equation as in \eqref{eqn weierstrass}. Then for each $a \in \FF_q$, the quadratic equation
\[
    y^2 + A_1ay + A_3y = a^3 + A_2a^2 + A_4a + A_6,
\]
splits completely in $\FF_{q^2}$. Hence every $a \in \FF_q$ is the $x$-coordinate of some point on $E(\FF_{q^2})$. Note that since $\FF_{q^2}$ is a finite field, $E(\FF_{q^2})$ is a finite abelian group. Suppose that $a = x(P)$ where $P \in E(\FF_{q^2})$ has order $n$. Then $a$ is periodic under $L$ if and only if there is some $k$ such that $L^k(a) = a$, which is equivalent to
\[
    a = L^k(a) = L^k(x(P)) = x(d^kP).
\]
Thus $d^kP = \pm P$, hence $d^{2k}P = P$, which is equivalent to $d^{2k} \equiv 1 \bmod n$. Such an integer $k$ exists if and only if $d$ is a unit modulo $n$, which is to say that $P$ is a point with order coprime to $d$.
\end{proof}

\subsection{Quadratic twists}
Let $E$ be an elliptic curve over $\FF_q$. The \emph{quadratic twist} of $E$ is an elliptic curve $E'$ over $\FF_q$ characterized up to isomorphism defined over $\FF_q$ by the following property: There exists an isomorphism $\iota: E \rightarrow E'$ defined over $\FF_{q^2}$, and for all such isomorphisms $\iota$ and points $P \in E(\FF_{q^2})$ we have
\begin{equation}
\label{eqn quadratic twist characterization}
    F(\iota(P)) = -\iota(F(P)).
\end{equation}

If the characteristic of $\FF_q$ is at least 5 and
\[
    E: y^2 = x^3 + Ax + B
\]
is a reduced Weierstrass equation for $E$, then
\[
    E': \alpha^2 y^2 = x^3 + Ax + B
\]
is an explicit formula for the quadratic twist $E'$, where $\alpha$ is a primitive element of $\FF_{q^2}$ such that $\alpha^2 \in \FF_q$. An isomorphism $\iota : E \rightarrow E'$ is given in affine coordinates by
\[
    \iota(a,b) = (a,b/\alpha).
\]
Note that by construction, $\alpha^q = -\alpha$. Hence if $P = (a,b) \in E(\FF_{q^2})$, then 
\[
    F(\iota(P)) = F(a,b/\alpha) = (a^q, b^q/\alpha^q) = (a^q, -b^q/\alpha) = -\iota(F(P)).
\]
There are similar explicit formulas for $E'$ in characteristics 2 and 3, but for our purposes the simplest way to work with quadratic twists in arbitrary characteristic is via the characterization \eqref{eqn quadratic twist characterization}. For the general theory of twists see Silverman \cite[\textsection X.2]{jhsec}, and for quadratic twists of elliptic curves in particular, see \cite[Ex. X.2.4]{jhsec}.

\begin{lem}
\label{lem frobenius eigenspaces}
Let $E$ be an elliptic curve defined over $\FF_q$ with quadratic twist $E'$.
Let $A, A' \subseteq E(\FF_{q^2})$ be the subgroups
\begin{align*}
    A &:= \{P \in E(\FF_{q^2}) : F(P) = P\} = E(\FF_q)\\
    A' &:= \{P \in E(\FF_{q^2}) : F(P) = -P\},
\end{align*}
where $F$ is the Frobenius automorphism.
\begin{enumerate}
    \item If $\iota: E \rightarrow E'$ is an isomorphism defined over $\FF_{q^2}$, then the restriction of $\iota$ to $A'$ gives a group isomorphism $\iota: A' \rightarrow E'(\FF_q)$.
    \item $\#E(\FF_q) + \#E'(\FF_q) = 2(q + 1)$.
    \item If $\tau$ is the trace of Frobenius of $E/\FF_q$, then the trace of Frobenius for $E'/\FF_q$ is $-\tau$.
\end{enumerate}
\end{lem}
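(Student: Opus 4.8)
The plan is to treat the three parts in order, reducing (2) and (3) to a single degree computation once (1) is established.

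For part (1), I would first recall that an isomorphism of elliptic curves is automatically an isomorphism of the underlying geometric groups \cite[III.4.8]{jhsec}, so both $\iota$ and its inverse (which is again defined over $\FF_{q^2}$) are group homomorphisms on $\overline{\FF}_q$-points. If $P \in A'$, then $F(P) = -P$, and combining the twist characterization \eqref{eqn quadratic twist characterization} with additivity of $\iota$ gives $F(\iota(P)) = -\iota(F(P)) = -\iota(-P) = \iota(P)$, so $\iota(P) \in E'(\FF_{q^2})$ is $F$-fixed and hence lies in $E'(\FF_q)$. Thus $\iota$ restricts to a homomorphism $A' \to E'(\FF_q)$, injective because $\iota$ is. For surjectivity I would run the argument in reverse: given $Q \in E'(\FF_q) \subseteq E'(\FF_{q^2})$, set $P := \iota^{-1}(Q) \in E(\FF_{q^2})$; from $F(Q) = Q$ and \eqref{eqn quadratic twist characterization} one gets $\iota(P) = -\iota(F(P)) = \iota(-F(P))$, hence $P = -F(P)$, i.e. $P \in A'$ maps to $Q$.

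For (2) and (3), note that by part (1) it suffices to prove $\#A' = q + 1 + \tau$: then $\#E(\FF_q) + \#E'(\FF_q) = (q+1-\tau) + (q+1+\tau) = 2(q+1)$, and the trace of Frobenius of $E'/\FF_q$ is $(q+1) - \#E'(\FF_q) = -\tau$. I would view the $q$-power Frobenius $F$ as a purely inseparable endomorphism of $E$ over $\overline{\FF}_q$ and observe that $A' = \ker([1] + F)$ as subgroups of $E(\overline{\FF}_q)$, since $F(P) = -P$ forces $F^2(P) = P$ and hence $P \in E(\FF_{q^2})$ already. Because Frobenius has vanishing differential, $d([1]+F) = d[1] = 1 \neq 0$, so $[1]+F$ is separable \cite[III.5.5]{jhsec} and $\#A' = \deg([1]+F)$. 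Finally I would compute this degree via the dual isogeny together with the identities $F + \widehat{F} = [\tau]$ and $F\widehat{F} = [q]$ coming from the characteristic polynomial of Frobenius \cite[V.2.3.1]{jhsec}:
\[
    [\,\deg([1]+F)\,] = ([1]+\widehat{F})([1]+F) = [1] + F + \widehat{F} + F\widehat{F} = [\,q + 1 + \tau\,],
\]
whence $\#A' = q+1+\tau$. (As a sanity check, the same computation with $[1]-F$ returns $\deg([1]-F) = \#E(\FF_q) = q+1-\tau$, consistent with \cite[V.2.3.1]{jhsec}.)

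The lemma itself is routine; the only steps demanding care are keeping everything uniform in the characteristic — appealing to separability of $[1]\pm F$ and to the characteristic polynomial of Frobenius, rather than to quadratic-character sums over $\FF_q$, sidesteps a separate Artin–Schreier argument when $p = 2$ — and, in part (1), using that an isomorphism of \emph{elliptic} curves (not merely of curves) respects the group law and that $\iota^{-1}$ is again $\FF_{q^2}$-rational.
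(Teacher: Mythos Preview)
Your proof is correct. Part (1) is essentially identical to the paper's argument: both verify $\iota(A') \subseteq E'(\FF_q)$ and $\iota^{-1}(E'(\FF_q)) \subseteq A'$ using the twist relation \eqref{eqn quadratic twist characterization}.

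For (2) and (3), however, your route is genuinely different. The paper argues geometrically: it considers the $x$-coordinate map $\widetilde{x}\colon A \sqcup A' \to \PP^1(\FF_q)$ from the \emph{disjoint} union and shows it is exactly $2$-to-$1$ (any $a \in \PP^1(\FF_q)$ lifts to some $P \in E(\FF_{q^2})$, and since $x(F(P)) = a$ one has $F(P) = \pm P$; a short case analysis on whether $P = -P$ finishes it). This immediately gives $\#A + \#A' = 2(q+1)$, and (3) follows by subtraction. You instead identify $A' = \ker([1]+F)$, observe that $[1]+F$ is separable because Frobenius has zero differential, and compute $\deg([1]+F) = q+1+\tau$ via the dual isogeny and the relations $F + \widehat{F} = [\tau]$, $\widehat{F}F = [q]$.

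Your argument is algebraically cleaner and invokes only standard facts from \cite{jhsec}, so it is perfectly acceptable. The paper's $2$-to-$1$ counting argument is more elementary (no dual isogenies needed) and has the side benefit that the map $\widetilde{x}$ reappears, in spirit, in the proof of Theorem \ref{thm lattes periodic count}, where the interplay between $A$, $A'$, and the $x$-projection is exactly what is needed to count periodic points of $L_d$. So the paper's approach dovetails more directly with what comes next, while yours is a tidy standalone computation.
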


\begin{proof}
(1) It suffices to show that $\iota(A') \subseteq E'(\FF_q)$ and $\iota^{-1}(E'(\FF_q)) \subseteq A'$.cIf $P \in A'$, then \eqref{eqn quadratic twist characterization} implies
\[
    F(\iota(P)) = -\iota(F(P)) = -\iota(-P) = \iota(P),
\]
hence $\iota(P) \in E'(\FF_q)$ by Galois theory. If $Q \in E'(\FF_{q})$, then 
\[
    F(\iota^{-1}(Q)) = -\iota^{-1}(F(Q)) = -\iota^{-1}(Q),
\]
where the first equality is equivalent to
\eqref{eqn quadratic twist characterization} and the second equality follows from $F(Q) = Q$. Hence $\iota^{-1}(Q) \in A'$.

(2) Consider the map $\widetilde{x}: A \sqcup A' \rightarrow \PP^1(\FF_q)$ from the disjoint union of $A$ and $A'$ to $\PP^1(\FF_q)$ induced by the $x$-coordinate projection map. We claim that $\widetilde{x}$ is exactly 2-to-1. Since $A = E(\FF_q)$ and $A' \cong E'(\FF_q)$ by (1), this will imply that
\[
    \#E(\FF_q) + \#E'(\FF_q) = 2\#\PP^1(\FF_q) = 2(q + 1).
\]
If $a \in \PP^1(\FF_q)$, then as we argued in the proof of Lemma \ref{lem characterizing lattes periodic points}, there exists some $P \in E(\FF_{q^2})$ such that $x(P) = a$. Since $F(P)$ is another point in $E(\FF_{q^2})$ with $x(F(P)) = a^q = a$, it follows that $F(P) = \pm P$. Thus $P \in A$ or $A'$, hence $\widetilde{x}$ is surjective. If $P \neq -P$, then $\pm P$ both belong to exactly one of $A$ or $A'$, and these are precisely the two points in $A \sqcup A'$ mapping to $a$ under $\widetilde{x}$. If $P = -P$, then $P$ is the only point on $E(\FF_{q^2})$ with $x$-coordinate $a$. Since $P \in A \cap A'$, there are two copies of $P$ in $A \sqcup A'$ and these are precisely the two points mapping to $a$ under $\widetilde{x}$. Hence $\widetilde{x}$ is exactly 2-to-1.

(3) Since $\#E(\FF_q) = q + 1 - \tau$, it follows from (2) that
\[
    \#E'(\FF_q) = 2(q + 1) - (q + 1 - \tau) = q + 1 + \tau.
\]
Therefore the trace of Frobenius for $E'/\FF_q$ is $-\tau$.
\end{proof}

We now turn to the proof of Theorem \ref{thm lattes periodic count}.

\begin{proof}[Proof of Theorem \ref{thm lattes periodic count}]
Lemma \ref{lem characterizing lattes periodic points} implies that $a \in \PP^1(\FF_q)$ is periodic under $L_d$ if and only if there exists some point $P \in E(\FF_{q^2})$ with order coprime to $d$ such that $a = x(P)$. Let $A, A' \subseteq E(\FF_{q^2})$ be the subgroups defined in Lemma \ref{lem frobenius eigenspaces}. Then any point $P \in E(\FF_{q^2})$ with $x(P) \in \PP^1(\FF_q)$ must belong to $A \cup A'$.

Let $A_d$ and $A_d'$ denote the subsets of $A$ and $A'$, respectively, of elements with order coprime to $d$. Lemma \ref{lem frobenius eigenspaces} implies that $\#A = \#E(\FF_q) = q + 1 - \tau$ and $\#A' = \#E'(\FF_q) = q + 1 + \tau$. If $G$ is a finite abelian group, then the number of elements of $G$ with order coprime to $d$ is the largest factor of $\#G$ coprime to $d$. Hence
\[
    \#A'_d = \frac{q + 1 + \tau}{\pi_+}, \hspace{.35in} \#A_d = \frac{q + 1 - \tau}{\pi_-},
\]
where
\[
    \pi_{\pm} := \prod_{\ell\mid d}\ell^{v_\ell(q + 1 \pm \tau)}.
\]
Suppose that $A_d' \cap A_d$ consists of $r$ points. The involution $P \mapsto -P$ acts freely on $A_d' \cup A_d \setminus (A_d' \cap A_d)$ and fixes each point of $A_d' \cap A_d$. Thus,
\begin{align*}
    \#\Per(L_d,\FF_q) &= \#x(A_d' \cup A_d)\\
    &= \#x(A_d' \setminus (A_d' \cap A_d)) + \#x(A_d \setminus (A_d' \cap A_d)) + \#x(A_d' \cap A_d')\\
    &= \frac{(q + 1 + \tau)/\pi_+ - r}{2} + \frac{(q + 1 - \tau)/\pi_- - r}{2} + r\\
    &= \frac{q + 1}{2}\Big(\frac{1}{\pi_+} + \frac{1}{\pi_-}\Big) + \frac{\tau}{2}\Big(\frac{1}{\pi_+} - \frac{1}{\pi_-}\Big).
\end{align*}
Dividing by $\#\PP^1(\FF_q) = q + 1$ we conclude that
\[
    \delta(L_d,q) := \frac{\#\Per(L,\FF_q)}{\#\PP^1(\FF_q)} = \frac{1}{2}\Big(\frac{1}{\pi_+} + \frac{1}{\pi_-}\Big) + \frac{\tau}{2(q + 1)}\Big(\frac{1}{\pi_+} - \frac{1}{\pi_-}\Big).
\]
Hasse proved the following bound on the trace of Frobenius,
\begin{equation}
\label{eqn Hasse bound}
    |\tau| \leq 2\sqrt{q}.
\end{equation}
See, for example, \cite[Thm. V.1.1]{jhsec}. Since $|\pi_+^{-1} - \pi_-^{-1}| < 1$ it follows that
\[
    \Big|\delta(L_d,q) - \frac{1}{2}\Big(\frac{1}{\pi_+} + \frac{1}{\pi_-}\Big)\Big| = \frac{|\tau|}{2}\Big| \frac{1}{\pi_+} - \frac{1}{\pi_-}\Big| < \frac{2q^{1/2}}{2(q + 1)} = \frac{1}{q^{1/2} + q^{-1/2}}.\qedhere
\]

\end{proof}

\begin{example}
Let $E/\FF_5$ be the elliptic curve with Weierstrass equation $y^2 = x^3 + x + 1$ over $\FF_5$. We quickly count the number of points in $E(\FF_5)$ by an exhaustive search and find
\[
    9 = \#E(\FF_5) = q + 1 - \tau = 5 + 1 + 3,
\]
hence the trace of Frobenius is $\tau = -3$. Thus $q + 1 + \tau = 5 + 1 - 3 = 3$. Therefore Theorem \ref{thm lattes periodic count} implies that the density of periodic points for $L_d$ with $d$ coprime to $5$ depends only on whether or not $d$ is divisible by 3. In particular,
\[
    \delta(L_d,5) = \begin{cases} 1 & \text{if }3 \nmid d\\ \frac{1}{6} & \text{if }3 \mid d. \end{cases}
\]
\end{example}

\begin{example}
Let $E/\FF_7$ be the elliptic curve with Weierstrass equation $y^2 = x^3 + x - 1$ over $\FF_7$. We compute
\[
    11 = \#E(\FF_7) = q + 1 - \tau = 7 + 1 + 3.
\]
Hence $\tau = -3$ and $q + 1 + \tau = 5$. Thus Theorem \ref{thm lattes periodic count} implies that the there are 4 cases for the density of periodic points of $L_d$ on $\PP^1(\FF_7)$ with $7 \nmid d$ depending on the greatest common divisor $(d,55)$ of $d$ and 55.
\[
    \delta(L_d,7) = \begin{cases} 
    1 & \text{if }(d,55) = 1,\\
    \frac{3}{4} & \text{if }(d,55) = 5,\\
    \frac{3}{8} & \text{if }(d,55) = 11,\\
    \frac{1}{8} & \text{if }(d, 55) = 55.
    \end{cases}
\]
\end{example}

\begin{cor}
\label{cor permutation characterization}
Let $E/\FF_q$ be an elliptic curve with trace of Frobenius $\tau$. Then the $d$th Latt\`es map associated to $E$ induces a permutation of $\PP^1(\FF_q)$ if and only if $(q + 1)^2 - \tau^2$ is coprime to $d$.
\end{cor}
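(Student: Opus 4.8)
The plan is to deduce Corollary~\ref{cor permutation characterization} directly from the point count in Theorem~\ref{thm lattes periodic count}. The map $L_d$ induces a permutation of $\PP^1(\FF_q)$ if and only if every point of $\PP^1(\FF_q)$ is periodic under $L_d$, i.e. if and only if $\delta(L_d,q) = 1$, or equivalently $\#\Per(L_d,\FF_q) = q+1$. From the computation in the proof of Theorem~\ref{thm lattes periodic count} we have
\[
    \#\Per(L_d,\FF_q) = \frac{q + 1}{2}\Big(\frac{1}{\pi_+} + \frac{1}{\pi_-}\Big) + \frac{\tau}{2}\Big(\frac{1}{\pi_+} - \frac{1}{\pi_-}\Big) = \frac{(q+1+\tau)/\pi_+ + (q+1-\tau)/\pi_-}{2},
\]
so the condition $\#\Per(L_d,\FF_q) = q+1$ becomes
\[
    \frac{q+1+\tau}{\pi_+} + \frac{q+1-\tau}{\pi_-} = 2(q+1).
\]

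First I would observe that, by construction, $\pi_+$ divides $q+1+\tau$ and $\pi_-$ divides $q+1-\tau$, so both summands on the left are positive integers bounded above by $q+1+\tau$ and $q+1-\tau$ respectively; hence the left side is at most $(q+1+\tau) + (q+1-\tau) = 2(q+1)$, with equality if and only if $\pi_+ = 1$ and $\pi_- = 1$ simultaneously. (One should note the degenerate cases where $q+1\pm\tau$ could be small, but Hasse's bound $|\tau|\le 2\sqrt q$ forces $q+1\pm\tau > 0$, and the divisibility argument goes through regardless.) Thus $L_d$ is a permutation of $\PP^1(\FF_q)$ iff $\pi_+ = \pi_- = 1$. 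Next I would unwind what $\pi_+ = \pi_- = 1$ means: $\pi_\pm = \prod_{\ell\mid d}\ell^{v_\ell(q+1\pm\tau)} = 1$ iff $v_\ell(q+1\pm\tau) = 0$ for every prime $\ell \mid d$, i.e. iff no prime dividing $d$ divides $q+1+\tau$ and no prime dividing $d$ divides $q+1-\tau$. Combining the two, this holds iff $d$ is coprime to $(q+1+\tau)(q+1-\tau) = (q+1)^2 - \tau^2$, which is exactly the claimed criterion.

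Alternatively — and this is the slicker route I would actually write up — I would use Lemma~\ref{lem characterizing lattes periodic points} together with Lemma~\ref{lem frobenius eigenspaces} more directly: $L_d$ is a permutation iff every $a\in\PP^1(\FF_q)$ is periodic, iff every point of $A\cup A'$ has order coprime to $d$ (since every $P\in E(\overline{\FF}_q)$ with $x(P)\in\PP^1(\FF_q)$ lies in $A\cup A'$), iff $d$ is coprime to both $\#A = \#E(\FF_q) = q+1-\tau$ and $\#A' = \#E'(\FF_q) = q+1+\tau$, iff $d$ is coprime to the product $(q+1-\tau)(q+1+\tau) = (q+1)^2 - \tau^2$. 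The only mild subtlety, which I would address explicitly, is the claim ``$d$ coprime to $\#G$ $\iff$ every element of the finite abelian group $G$ has order coprime to $d$'': the forward direction is immediate since element orders divide $\#G$, and the reverse follows because if a prime $\ell\mid d$ divides $\#G$ then Cauchy's theorem produces an element of order $\ell$, contradicting coprimality. I do not anticipate a genuine obstacle here; the main point is just to package the already-established counting identity cleanly, with a sentence of care devoted to the divisibility/order bookkeeping.
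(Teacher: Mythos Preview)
Your proposal is correct and your first approach is essentially the paper's own proof: the paper observes that $\delta(L_d,q)\le 1$ with equality iff $\pi_+=\pi_-=1$, and then translates $\pi_\pm=1$ into coprimality of $d$ with $(q+1+\tau)(q+1-\tau)$; you supply the clean justification of the equality case by bounding each summand $(q+1\pm\tau)/\pi_\pm\le q+1\pm\tau$, which the paper leaves implicit.

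Your second route, going straight through Lemma~\ref{lem characterizing lattes periodic points} and Lemma~\ref{lem frobenius eigenspaces} and using Cauchy's theorem on $A$ and $A'$, is a genuinely different and slightly more conceptual argument: it bypasses the explicit density formula entirely and reduces the question to whether the finite abelian groups $E(\FF_q)$ and $E'(\FF_q)$ have any elements of order dividing $d$. The only point to phrase carefully is that Lemma~\ref{lem characterizing lattes periodic points} requires the \emph{existence} of a lift of order coprime to $d$, but since the two lifts of a given $a$ are $\pm P$ and have the same order, ``some lift'' and ``every lift'' coincide here, so your equivalence ``every $a$ periodic $\iff$ every point of $A\cup A'$ has order coprime to $d$'' is indeed valid. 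Both arguments are short; the paper's has the virtue of reusing the density formula already proved, while yours makes the group-theoretic content more transparent.
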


\begin{proof}
Note that $L_d$ induces a permutation of $\PP^1(\FF_q)$ if and only if every point in $\PP^1(\FF_q)$ is periodic. Theorem \ref{thm lattes periodic count} implies that
\[
    \delta(L_d,q) = \frac{1}{2}\Big(\frac{1}{\pi_+} + \frac{1}{\pi_-}\Big) + \frac{\tau}{2(q + 1)}\Big(\frac{1}{\pi_+} - \frac{1}{\pi_-}\Big) \leq 1,
\]
with equality achieved if and only if
\[
    1 = \pi_\pm = \prod_{\ell \mid d} \ell^{v_\ell(q + 1 \pm \tau)},
\]
which is equivalent to $(q + 1)^2 - \tau^2 = (q + 1 - \tau)(q + 1 + \tau)$ being coprime to $d$.
\end{proof}

\begin{remark}
\label{remark kucuksakalli}
K\"u\c{c}\"uksakalli \cite{kucuksakalli} studied the value sets of Latt\`es maps associated to elliptic curves over $\FF_q$ which arise by starting with an elliptic curve with complex multiplication (CM) by the ring of integers in an imaginary quadratic field and reducing modulo a prime ideal $\pi$ of good reduction with norm $N(\pi) = q$. Their Corollary 2.8 gives a necessary and sufficient condition for such Latt\`es maps to induce permutations on $\PP^1(\FF_q)$ which is essentially equivalent to our Corollary \ref{cor permutation characterization}: Any elliptic curve $E/\FF_q$ which is not supersingular has endomorphism ring isomorphic to an order in an imaginary quadratic field \cite[Thm. V.3.1]{jhsec} and thus may be realized as the reduction of a CM elliptic curve defined over a number field at a prime of good reduction. While their result is stated for elliptic curves with CM by the full ring of integers in an imaginary quadratic field, the arguments appear to hold in greater generality.
\end{remark}

\section{Latt\`es periodic density in towers}

In this section we use Theorem \ref{thm lattes periodic count} to prove the following result on the density of periodic points for a fixed Latt\`es map in $\PP^1(\FF_{q^n})$ as $n$ varies. 

\begin{thm}
\label{thm periodic proportions}
Let $E/\FF_q$ be an elliptic curve, let $\tau_n$ be the trace of Frobenius of $E$ as an elliptic curve over $\FF_{q^n}$, and let $d$ be an integer coprime to $q$. For each positive integer $n$, there exists some $N$ depending on $q, \tau_1, d, n$ such that
\[
    \lim_{\substack{m\rightarrow \infty\\ m \equiv n \bmod cd^N}} \delta(L_d,q^m) = \frac{1}{2}\Big(\frac{1}{\pi_{n,+}} + \frac{1}{\pi_{n,-}}\Big),
\]
where
\[
    \pi_{n,\pm} := \prod_{\ell \mid d} \ell^{v_\ell(q^n + 1 \pm \tau_n)},
\]
the product is taken over all primes $\ell$ dividing $d$, $c := \lcm(\ell^2 - 1 : \ell \mid d \text{ is prime})$, and the limit is taken over all positive integers $m$ such that $m \equiv n \bmod cd^N$.
\end{thm}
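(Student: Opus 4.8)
The plan is to reduce the statement to an analysis of how the quantities $\pi_{m,\pm}$ and $\tau_m/(q^m+1)$ behave as $m$ runs over an arithmetic progression $m\equiv n\bmod cd^N$, and then apply Theorem~\ref{thm lattes periodic count} with $q$ replaced by $q^m$. The first step is to recall that if $\alpha,\beta$ are the Frobenius eigenvalues of $E/\FF_q$ (roots of $T^2-\tau_1 T+q$), then $\tau_m=\alpha^m+\beta^m$ and $q^m+1\pm\tau_m$ is either $(\alpha^m\mp1)(\beta^m\mp1)$ or similar; in any case each factor $q^m+1\pm\tau_m$ is a value of an explicit $\ZZ$-linear recurrence (or, over $\ZZ_\ell$, a combination of exponentials $m\mapsto\alpha^m,\beta^m$). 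So for each prime $\ell\mid d$ we must control $v_\ell(q^m+1\pm\tau_m)$ as $m$ varies.

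The key step is an $\ell$-adic continuity argument. Work in $\ZZ_\ell$ (or an unramified extension containing $\alpha,\beta$). The functions $m\mapsto\alpha^m$ and $m\mapsto\beta^m$ are continuous from $\ZZ$ (with the $\ell$-adic topology on the exponent) to the units, \emph{after} one first passes to a residue class making $\alpha^m\equiv\beta^m\equiv 1$ modulo $\ell$ (or modulo the maximal ideal); this is exactly why the modulus $c=\lcm(\ell^2-1:\ell\mid d)$ appears --- it kills the prime-to-$\ell$ part of the order of $\alpha$ and $\beta$ in the relevant residue field, whose multiplicative group has order dividing $\ell^2-1$. Once $\alpha^{m_0}\equiv\beta^{m_0}\equiv 1$, the map $t\mapsto \alpha^{m_0+ct}$ is given by a convergent power series (via $\exp/\log$ or Mahler expansion), hence uniformly continuous; so there is $N$ with the property that $m\equiv n\bmod cd^N$ forces $v_\ell(\alpha^m\mp 1)=v_\ell(\alpha^n\mp 1)$ and likewise for $\beta$, \emph{provided} those valuations are finite, i.e.\ $\alpha^n\ne\pm1$, $\beta^n\ne\pm 1$. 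This pins down $v_\ell(q^m+1\pm\tau_m)=v_\ell(q^n+1\pm\tau_n)$ for all $\ell\mid d$ simultaneously (take $N$ to be the max over the finitely many primes $\ell\mid d$), hence $\pi_{m,\pm}=\pi_{n,\pm}$ along the progression. I would phrase this cleanly as: the function $m\mapsto v_\ell(q^m+1\pm\tau_m)$, restricted to the set where it is finite, is locally constant in the $\ell$-adic topology on $\ZZ$ after translating into a suitable residue class mod $c$.

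It remains to handle the error term and the degenerate cases. For the error term: Theorem~\ref{thm lattes periodic count} gives $|\delta(L_d,q^m)-\tfrac12(\pi_{m,+}^{-1}+\pi_{m,-}^{-1})|<1/(q^{m/2}+q^{-m/2})\to 0$ as $m\to\infty$, so the ``drift'' term $\frac{\tau_m}{2(q^m+1)}(\pi_{m,+}^{-1}-\pi_{m,-}^{-1})$ vanishes in the limit regardless; combined with $\pi_{m,\pm}=\pi_{n,\pm}$ eventually along the progression, we get the claimed limit $\tfrac12(\pi_{n,+}^{-1}+\pi_{n,-}^{-1})$. For the degenerate cases where some $\alpha^n\mp1$ or $\beta^n\mp1$ vanishes (equivalently a Frobenius eigenvalue is a root of unity of order dividing $n$, which can happen only for supersingular curves): then that factor of $q^n+1\pm\tau_n$ is zero, $\pi_{n,\pm}$ involves $v_\ell$ of the other factor only, and along the progression $m\equiv n\bmod cd^N$ the same eigenvalue power stays $\equiv 1$, so $\alpha^m\mp 1$ has $\ell$-adic valuation tending to $\infty$ with $N$ while the other factor's valuation stabilizes; a short separate argument shows $\pi_{m,\pm}\to\pi_{n,\pm}$ in the appropriate sense, or one simply increases $N$ so that the unbounded factor contributes valuation at least that of $q^n+1\pm\tau_n$'s finite part. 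The main obstacle is organizing the $\ell$-adic continuity argument so that a single $N$ works uniformly for all $\ell\mid d$ and all four sign/eigenvalue combinations, and carefully isolating the $p$-part behavior ($p=\character\FF_q$) --- but since $d$ is coprime to $q$ we never need $v_p$, which removes the only genuinely delicate interaction; the rest is bookkeeping with finitely many convergent exponential functions.
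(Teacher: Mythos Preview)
Your proposal is correct and follows essentially the same route as the paper: express $\tau_m=\alpha^m+\beta^m$, use $\ell$-adic continuity of $m\mapsto\gamma^m$ on residue classes modulo $\ell^2-1$ (coming from the residue field of $\QQ_\ell(\alpha,\beta)$ having order dividing $\ell^2$), stabilize the valuations $v_\ell(q^m+1\pm\tau_m)$ on a progression $m\equiv n\bmod cd^N$, and then invoke the error bound from Theorem~\ref{thm lattes periodic count}.

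Two small points where the paper is cleaner. First, the paper applies continuity directly to the integers $\varepsilon_\pm(m):=q^m+1\pm\tau_m$ rather than to the individual factors $\alpha^m\mp1$, $\beta^m\mp1$; since $\varepsilon_-(m)=\#E(\FF_{q^m})>0$ and $\varepsilon_+(m)=\#E'(\FF_{q^m})>0$, these are never zero, so the ultrametric step $|\varepsilon_\pm(m)-\varepsilon_\pm(n)|_\ell<|\varepsilon_\pm(n)|_\ell\Rightarrow|\varepsilon_\pm(m)|_\ell=|\varepsilon_\pm(n)|_\ell$ goes through with no case analysis. Your ``degenerate cases'' $\alpha^n=\pm1$ in fact never occur, because $|\alpha|=|\beta|=\sqrt{q}>1$ as complex numbers, so that whole paragraph can be dropped. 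Second, the extension $\QQ_\ell(\alpha,\beta)/\QQ_\ell$ need not be unramified; the paper simply notes its residue degree is at most $2$, which is all that is needed to bound the order of the Teichm\"uller part of $\alpha,\beta,q$ by $\ell^2-1$.
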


\begin{proof}
Recall that the \emph{Hasse-Weil zeta function} of $E$ over $\FF_q$ is the formal power series
\[
    Z(E/\FF_q,x) := \exp\Big(\sum_{m\geq 1} \frac{\#E(\FF_{q^m})}{m}x^m\Big).
\]
The zeta function of $E$ may be expressed as a rational function \cite[Thm. V.2.4]{jhsec} given explicitly by
\[
    Z(E/\FF_q,x) = \frac{1 - \tau x + qx^2}{(1 - x)(1 - qx)}.
\]
Let $\alpha, \beta$ be the algebraic integers such that
\[
    1 - \tau x + qx^2 = (1 - \alpha x)(1 - \beta x).
\]
Then by taking a logarithm of $Z(E/\FF_q,x)$ and comparing coefficients we see that
\[
    \#E(\FF_{q^m}) = q^m + 1 - \alpha^m - \beta^m.
\]
Thus the trace of Frobenius of $E$ viewed as an elliptic curve over $\FF_{q^m}$ is $\tau_m = \alpha^m + \beta^m$.

Suppose $\ell$ is a prime dividing $d$. Let $|\cdot |_\ell$ be the $\ell$-adic absolute value on $\QQ$, let $\QQ_\ell$ be the $\ell$-adic completion of $\QQ$, and let $K/\QQ_\ell$ be a finite extension. Every unit $\gamma \in K$ may be expressed as $\gamma = \zeta (1 + \delta \lambda)$ for some root of unity $\zeta$, some $\delta$ with $|\delta|_\ell \leq 1$, and some uniformizer $\lambda$. Furthermore the order of $\zeta$ divides $\ell^f - 1$ where $f$ is the residue degree of $K/\QQ_\ell$ (see Koblitz\cite[\textsection III.3]{koblitz}.) The binomial theorem implies that $m \mapsto (1 + \delta\lambda)^m$ is an $\ell$-adically continuous function of $m$ \cite[\textsection II.2]{koblitz}. Hence $m \mapsto \gamma^m = \zeta^m (1 + \delta\lambda)^m$ is $\ell$-adically continuous when restricted to $m$ in a residue class modulo $\ell^f - 1$.

Now let $K := \QQ_\ell(\alpha, \beta)$ be the splitting field of $1 - \tau x + q x^2$ over $\QQ_\ell$. Since $\alpha, \beta$ are algebraic integers such that $\alpha \beta = q$ and $q$ is coprime to $\ell$ by assumption, it follows that $|q|_\ell = |\alpha|_\ell = |\beta|_\ell = 1$. The residue degree of $K/\QQ_\ell$ is at most 2. Therefore, with $n$ fixed, the functions
\[
    \ve_+(m) := q^m + 1 + \alpha^m + \beta^m, \hspace{.25in} \ve_-(m) := q^m + 1 - \alpha^m - \beta^m
\]
are $\ell$-adically continuous on the set of all integers $m$ such that $m \equiv n \bmod \ell^2 - 1$. Thus there exists some $N_\ell$ depending on $q, \tau, \ell$ such that $m \equiv n \bmod (\ell^2 - 1)\ell^{N_\ell}$ implies
\[
    |\ve_\pm(m) - \ve_\pm(n)|_\ell < |\ve_\pm(n)|_\ell.
\]
The ultrametric property of $|\cdot|_\ell$ implies that $|\ve_\pm(m)|_\ell = |\ve_\pm(n)|_\ell$ for all such $m$. This is equivalent to
\[
    v_\ell(q^m + 1 \pm \tau_m) = v_\ell(q^n + 1 \pm \tau_n).
\]

Let $N := \max_{\ell \mid d} N_\ell$ and let $c : \lcm(\ell^2 - 1 : \ell \mid d \text{ is prime})$. Hence if $m \equiv n \bmod cd^N$, then $m \equiv n \bmod (\ell^2 - 1)\ell^{N_\ell}$ for each prime $\ell \mid d$. Therefore, Theorem \ref{thm lattes periodic count} implies that for any such $m$,
\[
    \Big|\delta(L_d,q^m) - \frac{1}{2}\Big(\frac{1}{\pi_{n,+}} + \frac{1}{\pi_{n,-}}\Big)\Big| < \frac{1}{q^{m/2} + q^{-m/2}}.
\]
We conclude that
\[
    \lim_{\substack{m\rightarrow \infty\\ m \equiv n \bmod cd^N}} \delta(L_d,q^m) = \frac{1}{2}\Big(\frac{1}{\pi_{n,+}} + \frac{1}{\pi_{n,-}}\Big).\qedhere
\]

\end{proof}

\section{Latt\`es periodic densities when $\tau = 0$}
In this section we consider a special family of elliptic curves where the periodic density in $\PP^1(\FF_{q^n})$ of $L_\ell$, with $\ell$ prime, may be computed in terms of the $\ell$-adic valuation of $n$ and $q^e - 1$, where $e$ is the multiplicative order of $q$ modulo $\ell$.

\begin{thm}
\label{thm special supersingular evaluation}
Let $E/\FF_q$ be an elliptic curve with trace of Frobenius $\tau = 0$. If $\ell > 2$ is a prime not dividing $q$ and $e$ is the multiplicative order of $q$ modulo $\ell$, then set $w_n := v_\ell(q^e - 1) + v_\ell(n)$. Then
\[
    \delta(L_\ell,q^n)
    =
    \begin{cases}
         \ell^{-w_n} & \text{if $n$ is odd, $e \mid 2n$, and $e \nmid n$,}\\
         \frac{1}{2}(1 + \ell^{-2w_n}) + \frac{\epsilon}{q^{n/2} + q^{-n/2}}(1 - \ell^{-2w_n}) & \text{if $n$ is even and $e \mid n$,}\\
         1 & \text{otherwise,}
    \end{cases}
\]
where $\epsilon$ is a sign determined by
\[
\epsilon = 
    \begin{cases}
        +1 & \text{if $n\equiv 2 \bmod 4$ and $e\nmid n/2$, or $n\equiv 0 \bmod 4$ and $e \mid n/2$,} \\
        -1 & \text{if $n\equiv 2 \bmod 4$ and $e\mid n/2$, or $n\equiv 0 \bmod 4$ and $e \nmid n/2$.}
    \end{cases}
\]
Hence if $e$ does not divide $2n$, then $L_\ell$ induces a permutation on $\PP^1(\FF_{q^n})$.
\end{thm}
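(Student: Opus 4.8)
The plan is to apply Theorem~\ref{thm lattes periodic count} to $E$ viewed over $\FF_{q^n}$ with $d = \ell$, and to make the quantities $\pi_{n,\pm} = \ell^{v_\ell(q^n + 1 \pm \tau_n)}$ completely explicit using the hypothesis $\tau = 0$. Since the trace of Frobenius of $E/\FF_q$ is zero, the zeta-function factorization gives $1 + qx^2 = (1 - \alpha x)(1 - \beta x)$ with $\alpha\beta = q$ and $\alpha + \beta = 0$, so $\beta = -\alpha$ and $\alpha^2 = -q$. Hence $\tau_n = \alpha^n + \beta^n = \alpha^n(1 + (-1)^n)$, which is $0$ when $n$ is odd and $2\alpha^n = 2(-q)^{n/2}$ when $n$ is even. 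So we split into cases:

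\emph{Step 1 (odd $n$).} Here $\tau_n = 0$, so $\pi_{n,+} = \pi_{n,-} = \ell^{v_\ell(q^n+1)}$ and Theorem~\ref{thm lattes periodic count} gives $\delta(L_\ell,q^n) = \ell^{-v_\ell(q^n+1)}$ exactly (the $\tau_n$ term vanishes). Now I would compute $v_\ell(q^n + 1)$ in terms of $e$. Write $q^n + 1 = \frac{q^{2n}-1}{q^n - 1}$; then $\ell \mid q^n+1$ iff $e \mid 2n$ but $e \nmid n$. When that holds, a lifting-the-exponent style argument (using $\ell$ odd and $\ell \nmid q$) gives $v_\ell(q^n+1) = v_\ell(q^e - 1) + v_\ell(2n) = v_\ell(q^e-1) + v_\ell(n) = w_n$, since $\ell$ is odd so $v_\ell(2n) = v_\ell(n)$; I would need to check $e$ is even here so that $e \mid 2n,\ e\nmid n$ is consistent with $\ell \mid q^{e/2}+1$, feeding LTE correctly. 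If instead $e \nmid 2n$ (or $e \mid n$), then $\ell \nmid q^n + 1$, so $v_\ell(q^n+1)=0$ and $\delta = 1$; this is the ``otherwise'' branch for odd $n$, and $L_\ell$ is a permutation, matching the final sentence since $e \nmid 2n$ forces this branch.

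\emph{Step 2 (even $n$).} Now $\tau_n = 2(-q)^{n/2}$, and $q^n + 1 \pm \tau_n = q^n + 1 \pm 2(-q)^{n/2} = \big((-q)^{n/2} \pm 1\big)^2$ — this factorization is the crux of the even case. Thus $v_\ell(q^n + 1 \pm \tau_n) = 2\,v_\ell((-q)^{n/2} \pm 1)$. One of these is $q^{n/2} \mp 1$ up to sign depending on the parity of $n/2$; I would track when $(-q)^{n/2} = q^{n/2}$ versus $-q^{n/2}$ (i.e. $n \equiv 0$ vs $2 \bmod 4$) to identify which of $\pi_{n,\pm}$ carries the valuation. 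In all sub-cases, exactly one of $q^{n/2}\pm 1$ is divisible by $\ell$ (since $\ell$ is odd it can't divide both), and it's divisible iff $e \mid n$ (with a further split on $e \mid n/2$ vs $e \nmid n/2$ determining the sign). Applying LTE to whichever of $q^{n/2} \mp 1$ is divisible by $\ell$ gives valuation $v_\ell(q^e-1) + v_\ell(n/2)$ if $e \mid n/2$, and $v_\ell(q^e-1)+v_\ell(n)$ if $e \mid n$ but $e \nmid n/2$ — in the latter sub-case $e \mid n$ forces $e$ even and $e/2 \mid n/2$, and one checks $v_\ell(n/2) = v_\ell(n)$ there since then $v_\ell$ doesn't see the factor $2$; in the former, $v_\ell(n/2) = v_\ell(n)$ as well because $\ell$ is odd. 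So in both cases the exponent equals $w_n$, one of $\pi_{n,\pm}$ equals $\ell^{w_n}$ and the other equals $1$. Plugging $\{\pi_{n,+},\pi_{n,-}\} = \{1,\ell^{w_n}\}$ into Theorem~\ref{thm lattes periodic count}, one of the two signs in $\frac{1}{\pi_+} - \frac{1}{\pi_-}$ is realized; combined with $\tau_n = 2(-q)^{n/2}$ and $q^n + 1 = q^n+1$, the formula $\frac{1}{2}(1 + \ell^{-2w_n}) + \frac{\tau_n}{2(q^n+1)}(\pm(1 - \ell^{-w_n}))\cdots$ must be massaged into $\frac{1}{2}(1+\ell^{-2w_n}) + \frac{\epsilon}{q^{n/2}+q^{-n/2}}(1-\ell^{-2w_n})$; here I'd note $\frac{\tau_n}{q^n+1} = \frac{2(-q)^{n/2}}{q^n+1} = \frac{\pm 2}{q^{n/2}+q^{-n/2}}$, and I'd need to be careful that the product of the $\pm$ from $\tau_n$'s sign and the $\pm$ from which $\pi$ carries the valuation yields exactly the stated $\epsilon$. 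If $e \nmid n$, then $\ell \nmid q^{n/2}\pm 1$, so both $\pi_{n,\pm}=1$, $w_n$ need not even be defined as relevant, and $\delta = 1$.

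\emph{Main obstacle.} The bookkeeping in Step~2 — correctly matching the sign $\epsilon$ through the interaction of three separate parity conditions ($n \bmod 4$, whether $e \mid n/2$, and the sign $(-1)^{n/2}$ inside $(-q)^{n/2}$) — is where I expect the real work and the greatest risk of sign errors. The valuation computations themselves are a routine application of lifting-the-exponent (valid because $\ell > 2$ and $\ell \nmid q$), but assembling them into the clean closed form, and verifying the ``otherwise $\Rightarrow \delta = 1$'' and the permutation consequence (which is just $e \nmid 2n \Rightarrow \ell \nmid q^n+1,\ q^{n/2}\pm1$, hence $\pi_{n,\pm}=1$), will require careful case analysis rather than cleverness.
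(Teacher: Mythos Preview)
Your proposal is correct and follows essentially the same approach as the paper: compute $\tau_n$ from the zeta-function roots $\pm i\sqrt{q}$, factor $q^n + 1 \pm \tau_n$ as a perfect square in the even case, and evaluate the relevant $\ell$-adic valuations via lifting-the-exponent (the paper states and proves this as its Lemma~\ref{lem valuation of q^n - 1} rather than citing LTE by name). Your factorization $((-q)^{n/2}\pm 1)^2$ is exactly the paper's $(q^{n/2}\pm(-1)^{n/2})^2$, and the sign bookkeeping you flag as the main obstacle is indeed where the paper spends its effort, arriving at the same $\epsilon$ by the same case split on $n\bmod 4$ and $e\mid n/2$.
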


\begin{remark}
Recall that if $\FF_q$ has characteristic $p$, then an elliptic curve $E$ over $\FF_q$ is said to be \emph{supersingular} if $\tau$ is divisible by $p$. Supersingular elliptic curves have many exceptional properties, see Silverman \cite[\textsection V.3]{jhsec}. In particular, if $\tau = 0$, then $E$ must be supersingular. On the other hand, if $E$ is supersingular and $q = p \geq 5$, then the Hasse bound $|\tau| \leq 2\sqrt{p}$ implies that $\tau = 0$. Thus Theorem \ref{thm special supersingular evaluation} applies to all Latt\`es maps $L_\ell$ associated to supersingular elliptic curves over $\FF_p$ when $p\geq 5$.
\end{remark}

\begin{lem}
\label{lem valuation of q^n - 1}
Suppose that $q \in \overline{\QQ}_\ell$ is an element such that $v_\ell(q - 1) > \frac{1}{\ell - 1}$. Then
\[
    v_\ell(q^n - 1) = v_\ell(q - 1) + v_\ell(n).
\]
In particular, this holds when $q$ is an integer such that $q \equiv 1 \bmod \ell$.
\end{lem}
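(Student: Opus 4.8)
The plan is to use the $\ell$-adic logarithm to convert the multiplicative identity into a trivial additive one. Recall that $\log(1+z) = \sum_{k \geq 1} (-1)^{k-1} z^k/k$ converges for every $z \in \overline{\QQ}_\ell$ with $v_\ell(z) > 0$, and that it is multiplicative on $1 + \mm$ wherever it converges, so in particular $\log(q^n) = n \log q$. The one substantive input is that on the smaller disc $v_\ell(z) > \frac{1}{\ell - 1}$ the logarithm is an \emph{isometry}, meaning $v_\ell(\log(1+z)) = v_\ell(z)$. I would prove this by showing the leading term of the series strictly dominates: for every $k \geq 2$,
\[
    v_\ell\!\left(\tfrac{z^k}{k}\right) - v_\ell(z) = (k-1)\,v_\ell(z) - v_\ell(k) > \tfrac{k-1}{\ell - 1} - v_\ell(k) \geq 0,
\]
where the final inequality is the elementary estimate $\ell^j - 1 \geq j(\ell - 1)$ applied with $j = v_\ell(k)$ and $\ell^j \leq k$. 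The ultrametric inequality then forces $v_\ell(\log(1+z)) = v_\ell(z)$.

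With this in hand the lemma is immediate. The hypothesis $v_\ell(q - 1) > \frac{1}{\ell - 1}$ places $q$ in the small disc, so $v_\ell(\log q) = v_\ell(q - 1)$. Since $q^n - 1 = (q-1)(1 + q + \dots + q^{n-1})$ with the second factor an $\ell$-adic integer, $v_\ell(q^n - 1) \geq v_\ell(q-1) > \frac{1}{\ell-1}$, so $q^n$ also lies in the small disc and $v_\ell(\log(q^n)) = v_\ell(q^n - 1)$. Combining with $\log(q^n) = n\log q$ gives
\[
    v_\ell(q^n - 1) = v_\ell(\log(q^n)) = v_\ell(n) + v_\ell(\log q) = v_\ell(n) + v_\ell(q - 1).
\]
For the final sentence, if $q \in \ZZ$ with $q \equiv 1 \bmod \ell$ and $\ell > 2$ (the case we need), then $v_\ell(q - 1) \geq 1 > \frac{1}{\ell - 1}$, so the hypothesis holds.

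The main obstacle is the isometry estimate: verifying that every term $z^k/k$ with $k \geq 2$ has strictly larger valuation than $z$ on the disc $v_\ell(z) > \frac{1}{\ell-1}$. This is exactly where the hypothesis on $v_\ell(q-1)$ enters, and it is sharp --- at $k = \ell$ the inequality $v_\ell(z) > \frac{1}{\ell-1}$ must be strict --- while everything else is formal manipulation of convergent $\ell$-adic series. If one prefers to avoid $\ell$-adic analytic functions entirely, the same statement follows from the classical \emph{lifting the exponent} induction: peel off the prime-to-$\ell$ part of $n$ using $1 + q + \dots + q^{m-1} \equiv m \bmod \mm$, and handle the step $n \mapsto \ell n$ by expanding $\sum_{i=0}^{\ell-1}(1+u)^i$ with $u = q - 1$, where $\ell \mid \binom{\ell}{j+1}$ for $1 \leq j \leq \ell-2$ together with $v_\ell(u^{\ell-1}) > 1$ ensure that only the constant term $\ell$ contributes to the valuation.
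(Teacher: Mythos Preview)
Your proof is correct and takes a genuinely different route from the paper's. The paper factors $q^n-1 = \prod_{k=0}^{n-1}(q-\zeta_n^k)$, computes $v_\ell(1-\zeta_n^k)$ via cyclotomic norms to show each such valuation is at most $\tfrac{1}{\ell-1}$, and then uses the ultrametric inequality term by term so that $v_\ell(q-\zeta_n^k)=v_\ell(1-\zeta_n^k)$ for $k\neq 0$; summing over primitive $d$th roots for $d\mid n$ then contributes exactly~$1$ for each power of $\ell$ dividing $n$. Your argument instead linearizes via the $\ell$-adic logarithm: the estimate $(k-1)v_\ell(z)>\tfrac{k-1}{\ell-1}\geq v_\ell(k)$ for $k\geq 2$ shows $\log$ is an isometry on the disc $v_\ell(z)>\tfrac{1}{\ell-1}$, whence $v_\ell(q^n-1)=v_\ell(\log q^n)=v_\ell(n\log q)=v_\ell(n)+v_\ell(q-1)$. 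Both approaches make the sharpness of the threshold $\tfrac{1}{\ell-1}$ transparent---for you at the term $k=\ell$ in the log series, for the paper at the primitive $\ell$th roots of unity---and both work uniformly over $\overline{\QQ}_\ell$. Your route is arguably slicker and more conceptual (the statement becomes literally $v_\ell(nx)=v_\ell(n)+v_\ell(x)$ after applying an isometry), while the paper's cyclotomic computation stays purely algebraic and avoids invoking convergence and the homomorphism property of the $\ell$-adic logarithm. The lifting-the-exponent sketch you append is a third, more elementary path and is also fine. Your parenthetical restriction to $\ell>2$ in the final sentence is appropriate: for $\ell=2$ the condition $q\equiv 1\bmod 2$ only gives $v_2(q-1)\geq 1=\tfrac{1}{\ell-1}$, not strict inequality, so the ``in particular'' clause as stated in the lemma really does need $\ell$ odd.
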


\begin{proof}
Let $\zeta_n$ denote a primitive $n$th root of unity. Recall that
\[
    |N(1 - \zeta_n)| = 
    \begin{cases}
        p & \text{if $n = p^k$ for some prime $p$ and $k\geq 1$,}\\
        1 & \text{if $n$ is not a prime power,}
    \end{cases}
\]
where $N : \QQ(\zeta_n) \rightarrow \QQ$ is the norm function (see, for example, Lang \cite[Chp. IV, \textsection 1]{lang}.) Recall that the degree of the field extension $\QQ(\zeta_{\ell^m})/\QQ$ is $\varphi(\ell^m) := \ell^m - \ell^{m-1}$. Thus if $n$ is coprime to $\ell$, then $v_\ell(1 - \zeta_n^k) = 0$ for all $k\not\equiv 0 \bmod n$, and 
\[
    v_\ell(1 - \zeta_{\ell^m}^k) = \frac{1}{[\QQ(\zeta_{\ell^m}) : \QQ]}v_\ell(N(1 - \zeta_{\ell^m}^k)) = \frac{1}{\varphi(\ell^m)}v_\ell(N(1 - \zeta_{\ell^m})) = \frac{1}{\varphi(\ell^m)},
\]
for any $k \not\equiv 0 \bmod \ell$. Thus, $v_\ell(1 - \zeta_n^k) \leq \frac{1}{\ell - 1}$ for all $k\not\equiv 0 \bmod n$ and for all $m\geq 1$,
\[
    \sum_{k \in (\ZZ/(\ell^m))^\times}v_\ell(1 - \zeta_{\ell^m}^k) = 1.
\]

If $n\geq 1$, the factorization $q^n - 1 = \prod_{k=0}^{n-1}(q - \zeta_n^k)$ implies that
\[
    v_\ell(q^n - 1) - v_\ell(q - 1) = \sum_{k=1}^{n-1}v_\ell(q - \zeta_n^k)
    = \sum_{k=1}^{n-1}\min(v_\ell(q - 1), v_\ell(1 - \zeta_n^k))
    = \sum_{k=1}^{n-1}v_\ell(1 - \zeta_n^k),
\]
where the second and third equalities follow from our assumption that $v_\ell(q - 1) > \frac{1}{\ell - 1} \geq v_\ell(1 - \zeta_n^k)$. Observe that each $\zeta_n^k$ has a unique expression as $\zeta_d^j$ where $d \mid n$ and $j$ is a unit modulo $d$. Thus, if $v_\ell(n) = m$, then
\[
    v_\ell(q^n - 1) - v_\ell(q - 1) = \sum_{d\mid n}\sum_{j\in (\ZZ/(d))^\times}v_\ell(1 - \zeta_d^j)
    = \sum_{i=1}^m \sum_{j\in (\ZZ/(\ell^i))^\times}v_\ell(1 - \zeta_{\ell^i}^j)
    = m.\qedhere
\]
\end{proof}

\begin{proof}[Proof of Theorem \ref{thm special supersingular evaluation}]
Let $E$ be an elliptic curve over $\FF_q$ with $\tau = 0$. Let $\tau_n$ be the trace of Frobenius of $E$ over $\FF_{q^n}$. Then as discussed in the proof of Theorem \ref{thm periodic proportions}, we have $\tau_n = \alpha^n + \beta^n$ where
\[
    (1 - \alpha x)(1 - \beta x) = 1 - \tau x + qx^2 = 1 - qx^2.
\]
Hence $\alpha, \beta = \pm i \sqrt{q}$ and
\[
    \tau_n = (i^n + (-i)^n)q^{n/2} = 
    \begin{cases}
        \hspace{1.4em}0 & \text{if $n$ is odd,}\\
        2(-1)^{n/2}q^{n/2} & \text{if $n$ is even.}
    \end{cases}
\]
Therefore,
\[
    q^n + 1 \pm \tau_n = 
    \begin{cases}
        q^n + 1 & \text{if $n$ is odd,}\\
        (q^{n/2} \pm (-1)^{n/2})^2 & \text{if $n$ is even.}
    \end{cases}
\]

If $n$ is odd, then Theorem \ref{thm lattes periodic count} implies that
\begin{align*}
    \delta(L_\ell,\FF_{q^n}) 
    &= \frac{1}{2}\Big(\ell^{-v_\ell(q^n + 1 + \tau_n)} + \ell^{-v_\ell(q^n + 1 - \tau_n)}\Big) + \frac{\tau_n}{q^n + 1}\Big(\ell^{-v_\ell(q^n + 1 + \tau_n)} - \ell^{-v_\ell(q^n + 1 - \tau_n)}\Big)\\
    &= \ell^{-v_\ell(q^n + 1)}.
\end{align*}
Since $\ell > 2$ and $(q^n + 1) + (q^n - 1) = 2q^n$, at most one of $v_\ell(q^n + 1)$ and $v_\ell(q^n - 1)$ is positive. Note that
\[
    v_\ell(q^n + 1) + v_\ell(q^n - 1) = v_\ell(q^{2n} - 1),
\]
and $v_\ell(q^{2n} - 1) > 0$ if and only if $e \mid 2n$. Similarly, $v_\ell(q^n - 1) > 0$ if and only if $e \mid n$. Hence $v_\ell(q^n + 1) > 0$ is equivalent to $e \mid 2n$ and $e \nmid n$. In that case Lemma \ref{lem valuation of q^n - 1} implies that
\begin{align*}
    v_\ell(q^n + 1) &= v_\ell(q^{2n} - 1) - v_\ell(q^n - 1)\\
    &= v_\ell(q^{2n} - 1)\\
    &= v_\ell(q^e - 1) + v_\ell(2n/e)\\
    &= v_\ell(q^e - 1) + v_\ell(n)\\
    &= w_n.
\end{align*}
The fourth equality follows from the fact that $e$ is a divisor of $\varphi(\ell) = \ell - 1$, hence $v_\ell(e) = 0$. Therefore, if $n$ is odd,
\[
    \delta(L_\ell,q^n) =
    \begin{cases}
         \ell^{-w_n} & \text{if $e\mid 2n$ and $e\nmid n$,}\\
         1 & \text{otherwise.}
    \end{cases}
\]

Next suppose that $n$ is even. Then by Theorem \ref{thm lattes periodic count},
\[
    \delta(L_\ell,q^n)
    = \frac{1}{2}\Big(\ell^{-2a_{n}^+} + \ell^{-2a_{n}^-}\Big) + \frac{(-1)^{n/2}}{q^{n/2} + q^{-n/2}}\Big(\ell^{-2a_n^+} - \ell^{-2a_n^-}\Big),
\]
where $a_n^\pm = v_\ell(q^{n/2} \pm (-1)^{n/2})$. Since
\[
    (q^{n/2} - (-1)^{n/2}) + (q^{n/2} + (-1)^{n/2}) = 2q^{n/2},
\]
and $\ell > 2$ by assumption, it follows that at most one of $a_n^\pm$ is positive. Then
\[
    a_n^+ + a_n^- = v_\ell(q^n - 1)
\]
is positive if and only if $e\mid n$. Suppose that $e\mid n$. Lemma \ref{lem valuation of q^n - 1} implies that
\[
    v_\ell(q^n - 1) = v_\ell(q^e - 1) + v_\ell(n/e) = w_n.
\]
Furthermore, we have $v_\ell(q^{n/2} - 1) > 0$ if and only if $e \mid n/2$, hence $v_\ell(q^{n/2} + 1) > 0$ if and only if $e \nmid n/2$. Putting it all together, for $n$ even and $e \mid n$ we have
\[
    \delta(L_\ell,q^n)
    = \frac{1}{2}\Big(1 + \ell^{-2w_n}\Big) + \frac{\epsilon}{q^{n/2} + q^{-n/2}}\Big(1 - \ell^{-2w_n}\Big),
\]
where
\[
    \epsilon = 
    \begin{cases}
        +1 & \text{if $n\equiv 2 \bmod 4$ and $e\nmid n/2$, or $n\equiv 0 \bmod 4$ and $e \mid n/2$,} \\
        -1 & \text{if $n\equiv 2 \bmod 4$ and $e\mid n/2$, or $n\equiv 0 \bmod 4$ and $e \nmid n/2$.}
    \end{cases}
\]
In all other cases, $\delta(L_\ell,q^n) = 1$. For example, this holds when $e$ does not divide $2n$.
\end{proof}

\bibliographystyle{plain}
\bibliography{Ref}

\end{document}